\date{}
\renewcommand{\phi}{\varphi}
\newtheorem{theorem}{Theorem}[section]
\newtheorem{lemma}[theorem]{Lemma}
\newtheorem{proposition}[theorem]{Proposition}
\newtheorem{cor}[theorem]{Corollary}
\theoremstyle{remark}
\newtheorem{remark}[theorem]{Remark}
\theoremstyle{definition}
\newtheorem{definition}[theorem]{Definition}
\theoremstyle{definition}
\newtheorem{notation}[theorem]{Notation}
\author{\Large{Sébastien Gaspoz and Riccardo W. Maffucci}}
\title{\Large{\uppercase{\bf Independence numbers of polyhedral graphs}}}
\begin{document}
\maketitle
\nocite{dillencourt1996polyhedra}



\begin{abstract}
A polyhedral graph is a $3$-connected planar graph. We find the least possible order $p(k,a)$ of a polyhedral graph containing a $k$-independent set of size $a$ for all positive integers $k$ and $a$. In the case $k = 1$ and $a$ even, we prove that the extremal graphs are exactly the vertex-face (radial) graphs of maximal planar graphs.
\end{abstract}
{\bf Keywords:} Independent set, Extremal problem, Planar graph, $3$-polytope, Radial graph.
\\
{\bf MSC(2010):} 05C69, 05C35, 05C10, 52B05.

\section{Introduction}
Given a graph $G = (V,E)$, a subset of vertices $S \subseteq V$ is an independent set if no two vertices in $S$ are adjacent to one another. A maximum independent set $S$ has largest possible size, namely there is no other independent set $S'$ such that $|S'| > |S|$. The independence number of $G$, usually denoted in the literature by $\alpha(G)$, is the size of a maximum independent set of $G$. Independent sets have been subject to extensive research as a consequence of their importance in Ramsey theory \cite{graham1991ramsey}, and due to their relation with many other properties of graphs (such as determination of the chromatic and covering numbers) \cite{bondy1976graph}. Finding a maximum independent set of a given graph was shown to be an NP-complete problem \cite{karp1972reducibility}. 

A typical question is, 
given any integer $a$, what is the smallest possible order $p_a$ of a graph $G$ containing an independent set of size $a$? 
In other words, if $\alpha(G) = a$, how small can $G$ be? For the question to be non-trivial, one needs to impose certain conditions on $G$, i.e., to restrict to a specific class of graphs.

In this paper, we consider the class of polyhedral graphs, namely the graphs that are isomorphic to the $1$-skeleton of a polyhedral solid. By the Rademacher-Steinitz theorem, a graph $G$ is polyhedral if and only if it is planar and $3$-connected \cite{grunbaum2007graphs}. For a polyhedral graph $G$ (or polyhedron for short), regions are also called faces, and each edge is adjacent to exactly two faces. Apart from their combinatorial interest, polyhedra have applications in chemistry where, for instance, they are used to model fullerenes \cite{chen2005spherical,deza2009symmetries}. 

Our first result is the precise value of $p_a$ for each $a$.
\begin{theorem}\label{thm:pa}
	Let $a\geq 1$. The minimal order of a polyhedral graph containing an independent set of size $a$ is given by \[p_a = \left\lceil\frac{3}{2}a + 2\right\rceil.\]
\end{theorem}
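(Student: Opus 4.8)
The plan is to prove the two matching bounds separately.

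For the lower bound, let $G$ be a polyhedral graph on $n$ vertices containing an independent set $S$ with $|S| = a$. Since $G$ is $3$-connected, every vertex has degree at least $3$, so the number of edges incident to $S$ equals $\sum_{v \in S}\deg(v) \ge 3a$, and these edges are pairwise distinct because $S$ is independent. The subgraph $B$ they form, on the vertex set $S \cup N(S)$, is simple, planar, and bipartite, since all of its edges join $S$ to $V \setminus S$. A simple bipartite planar graph on $N \ge 3$ vertices has at most $2N - 4$ edges, and here $N = a + |N(S)| \le n$. Hence $3a \le 2n - 4$, i.e. $n \ge \tfrac{3}{2} a + 2$, and since $n$ is an integer, $n \ge \lceil \tfrac{3}{2} a + 2\rceil = p_a$.

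For the upper bound I would construct, for each $a$, a polyhedral graph attaining the bound, using radial graphs. Recall that the radial graph $R(H)$ of a plane graph $H$ is the bipartite graph whose classes are $V(H)$ and $F(H)$, a vertex being joined to each incident face; $R(H)$ is a planar quadrangulation, and it is $3$-connected (hence polyhedral) whenever $H$ is. In particular the face class $F(H)$ is an independent set of $R(H)$ of size $|F(H)|$, while $|V(R(H))| = |V(H)| + |F(H)|$. When $a$ is even, take $H$ to be a triangulation on $v = \tfrac{a+4}{2}$ vertices, for instance the bipyramid over a $(v-2)$-gon when $v \ge 5$, and $K_4$ when $v = 4$. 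Then $|F(H)| = 2v - 4 = a$ and $|V(R(H))| = 3v - 4 = \tfrac{3}{2} a + 2 = p_a$, so the face class is an independent set of the required size in a polyhedron of minimal order.

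When $a$ is odd I would instead take $H$ to be a $3$-connected plane graph on $v = \tfrac{a+5}{2}$ vertices all of whose faces are triangles except a single quadrilateral; such an $H$ arises by deleting one suitably chosen edge of a triangulation, merging two triangles into a quadrilateral, while preserving $3$-connectivity. A short Euler-formula computation gives $|F(H)| = 2v - 5 = a$ and $|V(R(H))| = 3v - 5 = \tfrac{3a+5}{2} = p_a$, so again $F(H)$ is an independent set of size $a$ in a polyhedron of order $p_a$. The remaining small cases $a \in \{1,2,3\}$, where these families degenerate, are handled directly by the tetrahedron, the triangular bipyramid, and the hexagonal wheel $W_6$, respectively.

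The routine parts are the Euler-formula bookkeeping and the bipartite planar edge bound. The main obstacle is the connectivity analysis on the construction side: verifying that the radial graph of a $3$-connected plane graph is again $3$-connected, and, for odd $a$, exhibiting a $3$-connected near-triangulation with exactly one quadrilateral face and checking that the edge deletion does not destroy $3$-connectivity. I expect these connectivity verifications, together with isolating the finitely many degenerate small cases, to be where the real work lies.
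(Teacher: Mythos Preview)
Your argument is correct and takes a genuinely different route from the paper on both halves.

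For the lower bound, the paper argues by contradiction: it first normalises a hypothetical counterexample so that no face contains three blue vertices, then deletes the blue--blue edges to obtain a quadrangulation $H$, and finally combines $q_H\ge 3a$ with $q_H=2p-4$. Your approach short-circuits this: you apply the bipartite planar edge bound $|E(B)|\le 2|V(B)|-4$ directly to the subgraph $B$ spanned by the $S$--$N(S)$ edges. This is strictly more elementary---no face analysis, no edge deletions, no auxiliary quadrangulation---and it yields the same inequality $3a\le 2n-4$ in one line. (The paper's reduction is, in effect, a roundabout way of producing a bipartite planar graph on which to run exactly this count.)

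For the upper bound, the paper builds extremal graphs inductively by repeatedly applying a local transformation $\mathcal{P}$ to the cube or to the pseudo-double-wheel, gaining three vertices and two red vertices at each step. You instead construct the extremal graphs in one shot as radial graphs of (near-)triangulations. This is more conceptual and, in the even case, actually anticipates the paper's Theorem~\ref{thm:classifextrem}, which \emph{a posteriori} characterises the even extremal graphs as precisely these radial graphs. The price you pay is the connectivity verification you flagged: that $R(H)$ is $3$-connected when $H$ is (this is standard, see e.g.\ Archdeacon et al.), and that for each odd $a\ge 5$ one can delete an edge from a triangulation on $(a+5)/2$ vertices while preserving $3$-connectivity. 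The paper's inductive construction sidesteps both issues, since the transformation $\mathcal{P}$ is manifestly $3$-connectivity-preserving; your construction is cleaner once those lemmas are in hand.
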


Theorem \ref{thm:pa} will be proven in section \ref{sec:i1}. Further, when $a$ is even, we characterise all extremal graphs, i.e., we find all the polyhedra on $\left\lceil\frac{3}{2}a + 2\right\rceil$ vertices with an independent set of size $a$.
\begin{theorem}\label{thm:classifextrem}
	Let $a \geq 4$ be even. Then $G$ is a polyhedron on $p_a$ vertices containing an independent set of size $a$ with minimal number of edges if and only if it is the vertex-face graph of a maximal planar graph with $a$ faces.
\end{theorem}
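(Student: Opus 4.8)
The plan is to prove both directions through a sharp lower bound on the number of edges. Set $p = p_a = \frac{3}{2}a + 2$ (an integer since $a$ is even) and let $G$ be a polyhedron on $p$ vertices with an independent set $S$ of size $a$; write $T = V(G)\setminus S$, so $|T| = p - a = \frac{a}{2}+2$. Since $G$ is $3$-connected every vertex has degree at least $3$, and since $S$ is independent all edges incident to $S$ go to $T$; counting these edges gives $|E(G)| \ge 3a$. Because the radial graph of a triangulation with $a$ faces realizes exactly $3a$ edges (verified in the forward direction), the minimal edge number is precisely $3a$, and equality forces two things simultaneously: every vertex of $S$ has degree exactly $3$, and there are no edges inside $T$. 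Hence an extremal $G$ is bipartite with parts $S$ and $T$, all vertices of $S$ having degree $3$.

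Next I would identify $G$ as a quadrangulation. Euler's formula gives $|F(G)| = |E(G)| - |V(G)| + 2 = 3a - (\tfrac{3}{2}a+2) + 2 = \tfrac{3}{2}a$. As $G$ is bipartite every face has even length at least $4$, so $2|E(G)| = \sum_f \deg f \ge 4|F(G)|$; since $2|E(G)| = 6a = 4\cdot\tfrac{3}{2}a$, equality holds and every face is a $4$-cycle. Each face therefore reads $t_1 s_1 t_2 s_2$ with $t_1,t_2 \in T$ and $s_1,s_2 \in S$, and I would reconstruct a plane graph $H$ on vertex set $T$ by drawing, inside each face, the diagonal $t_1 t_2$. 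These diagonals live in disjoint open faces, so $H$ is plane; it has $|T| = \tfrac{a}{2}+2$ vertices and $|F(G)| = \tfrac{3}{2}a$ edges, and the three faces of $G$ around each degree-$3$ vertex $s \in S$ merge into one triangular face of $H$, giving exactly $|S| = a$ faces. Thus $H$ is a plane triangulation with $a$ faces, exhibiting $G$ as the radial graph $R(H)$.

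For the forward direction I would take a maximal planar graph $H$ with $a$ faces, so $|V(H)| = \tfrac{a}{2}+2$ and $|E(H)| = \tfrac{3}{2}a$ by Euler, and check that its radial graph $G = R(H)$ is a polyhedron on $|V(H)| + |F(H)| = \tfrac{3}{2}a + 2 = p_a$ vertices with $2|E(H)| = 3a$ edges; the face class $F(H)$ is an independent set of size $a$, and by the lower bound $3a$ edges is minimal. The $3$-connectivity of $R(H)$ follows from that of $H$ for a triangulation with at least four vertices, i.e.\ for $a \ge 4$.

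I expect the main obstacle to be the rigorous verification that the reconstructed $H$ is a \emph{simple} and \emph{$3$-connected} graph, rather than merely a plane multigraph: one must rule out that two distinct faces of $G$ yield the same $T$-diagonal (a double edge) and must transfer the $3$-connectivity of the quadrangulation $G$ back to $H$. This is precisely the content of the classical bijection between $3$-connected plane quadrangulations and dual pairs of $3$-connected plane graphs via the radial construction, and the care needed is in checking that a separating pair or a parallel pair of diagonals in $H$ would force a $2$-cut in $G$, contradicting its polyhedrality. Handling the small case and the choice of the part $T$ rather than $S$ (so that the reconstructed faces, corresponding to the degree-$3$ class, are triangles) is the remaining necessary bookkeeping.
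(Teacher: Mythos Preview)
Your approach is correct and genuinely different from the paper's. You obtain bipartiteness and the degree-$3$ condition on $S$ in one stroke from the sharp edge bound $|E(G)|\ge 3a$ and then force the quadrangulation via Euler; the paper instead invokes Theorem~\ref{thm:pa} to forbid faces with three blue vertices, uses edge minimality only to kill the residual blue--blue triangles, and then spends most of its effort proving that $G$ has no separating $4$-cycle (again via Theorem~\ref{thm:pa}, splitting along the cycle and summing the two inequalities) so as to quote the standard radial-graph characterisation \cite[Lemma 2.8.2]{mohar_graphs_2001}. Your route is more self-contained---it never uses Theorem~\ref{thm:pa} in the forward direction---and the reconstruction of $H$ by $T$-diagonals is explicit rather than cited. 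The obstacle you flag is exactly the right one and is essentially equivalent to the paper's separating-$4$-cycle lemma; your sketched resolution (a parallel pair of diagonals in $H$ yields a $2$-cut $\{t_1,t_2\}$ in $G$) is in fact correct here, because every face of $H$ is a triangle and hence every $S$-vertex of $G$ lying inside the digon bounded by the parallel pair has all three of its neighbours among $t_1,t_2$ and the interior $T$-vertices, so removing $t_1,t_2$ separates the interior from the exterior. What your argument buys is a proof that avoids both the external citation and the repeated appeal to the extremal vertex bound; what the paper's argument buys is a cleaner link to the known structural theory of quadrangulations.
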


Theorem \ref{thm:classifextrem} will be proven in section \ref{sec:i2}. The vertex-face graph is also called radial in the literature. For a definition see e.g. \cite[section 2]{archdeacon_construction_1992}.

We now turn to a more general, related concept.
\begin{definition}
	Let $G = (V, E)$ be a graph. A \textbf{k-independent set} is a subset $S \subseteq V$ such that any two vertices in $S$ are at distance greater than $k$ from one another, i.e. \[\forall u, v \in S : \quad d(u,v) > k. \]
	For any integers $k, a \geq 1$, we let $p(k,a)$ be the minimal order of a polyhedron containing a $k$-independent set of size $a$.
\end{definition}

We clearly have $p(1, a)=p_a$. The case $a=1$ is trivial, namely $p(k,1) = 4$ for any $k$, by considering the tetrahedron (the smallest polyhedron). Our next two theorems give the precise value of $p(k, a)$ for each $k,a\geq 2$, distinguishing between the cases of even and odd $k$.
\begin{theorem}\label{thm:pka_even}
	Let $a \geq 2$ and $k$ be an even positive integer. Then \[p(k, a) = a + 3a\cdot\frac{k}{2} = \left(\frac{3}{2}k + 1\right)a.\]
\end{theorem}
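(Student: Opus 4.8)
The plan is to establish the formula $p(k,a) = \left(\frac{3}{2}k+1\right)a$ for even $k\geq 2$ by proving matching upper and lower bounds. For the upper bound I would construct an explicit polyhedron on exactly $\left(\frac{3}{2}k+1\right)a$ vertices carrying a $k$-independent set of size $a$. A natural idea is to start from an extremal configuration for the case $k=1$ (Theorems \ref{thm:pa} and \ref{thm:classifextrem}), namely a radial graph of a maximal planar graph on $a$ faces, and then to \emph{subdivide} each edge enough times to push the pairwise distances above $k$. Since $k$ is even, writing $k=2m$, the distance between two chosen vertices can be increased uniformly by inserting $m$ new vertices along each edge lying on a shortest path between members of the independent set. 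The vertex count $\left(\frac32 k+1\right)a = a + 3a\cdot m$ strongly suggests the correct picture: keep the $a$ independent vertices, and spend $3am$ extra vertices, i.e. $3m$ subdivision vertices per independent vertex, reflecting that in the extremal $k=1$ graph each independent vertex has degree related to $3$. The main verification here is that the subdivided graph remains $3$-connected and planar (planarity is automatic under subdivision, and $3$-connectivity is preserved when one subdivides edges of a $3$-connected graph provided the endpoints retain degree $\geq 3$), and that the minimum pairwise distance is exactly $k+1$.

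For the lower bound, which I expect to be the crux, I would argue that any polyhedron $G$ containing a $k$-independent set $S=\{v_1,\dots,v_a\}$ must have at least $\left(\frac32 k+1\right)a$ vertices. The strategy is to associate to each $v_i$ a region of the graph — for instance the set of vertices within distance $k/2$ of $v_i$ — and to show these regions are pairwise disjoint and each large. Disjointness follows from the $k$-independence: if a vertex were within distance $k/2$ of both $v_i$ and $v_j$, the triangle inequality would give $d(v_i,v_j)\leq k$, contradicting $d(v_i,v_j)>k$. The task then reduces to a lower bound on the number of vertices in each ball $B_{k/2}(v_i)$. Here I would invoke $3$-connectivity via a Menger-type argument: from $v_i$ there are three internally disjoint paths emanating outward, and each must contain at least $k/2$ vertices before reaching distance $k/2+1$, contributing roughly $3\cdot \frac{k}{2}$ vertices per ball beyond the center $v_i$ itself. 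Summing over the $a$ disjoint balls gives $a\left(1 + 3\cdot\frac{k}{2}\right) = \left(\frac32 k+1\right)a$.

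The hard part will be making the per-ball counting rigorous, because three internally disjoint paths from $v_i$ only guarantee $3$ distinct neighbours and disjoint continuations, but one must carefully count vertices at each distance shell $1,2,\dots,k/2$ without double-counting and while accounting for the possibility that paths merge or that a ball touches the boundary of the graph. I would handle this by fixing a BFS layering centred at $v_i$ and using $3$-connectivity to show that each of the first $k/2$ layers contains at least the contributions from three disjoint paths; a clean way is to contract each ball and use the fact that a $3$-connected graph has minimum degree $3$, so that the "interface" between successive shells cannot shrink below what three disjoint paths require. The parity assumption $k$ even is what lets $k/2$ be an integer and makes the balls $B_{k/2}(v_i)$ a clean partition tool; the odd case (treated separately in the next theorem) presumably requires a half-integer adjustment, which is precisely why the two cases are split. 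I would conclude by checking the construction of the upper bound actually achieves equality in every inequality used in the lower bound, confirming optimality.
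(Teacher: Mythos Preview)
Your lower bound is essentially the paper's argument, and in fact simpler than you fear. The clean statement is: for each $i$ and each $n\in\{1,\dots,k/2\}$, the distance-$n$ sphere $V_{i,n}=\{v:d(x_i,v)=n\}$ is a vertex cut separating $x_i$ from $x_j$ (any $j\neq i$), hence $|V_{i,n}|\geq 3$ by $3$-connectivity. There is no need to track three Menger paths or worry about merging; the shells themselves are the cuts. Disjointness of all $a\cdot\frac{k}{2}$ shells (and of $S$ from them) follows from the triangle inequality exactly as you say, and summing gives $p\geq a+3a\cdot\frac{k}{2}$.

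The upper bound, however, has a real gap. Edge subdivision creates vertices of degree~$2$, so the resulting graph is \emph{not} $3$-connected (the two neighbours of any subdivision vertex form a $2$-cut), hence not polyhedral. Your parenthetical claim that ``$3$-connectivity is preserved when one subdivides edges of a $3$-connected graph provided the endpoints retain degree $\geq 3$'' is false for exactly this reason. Moreover, the arithmetic does not close: starting from a $k=1$ extremal graph on $\left\lceil\frac{3}{2}a+2\right\rceil$ vertices and only adding vertices can never reach $a+3am<\left\lceil\frac{3}{2}a+2\right\rceil+3am$ vertices.

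The paper instead starts from the $k=2$ extremal family $G_a$ (a cycle of $a$ copies of $K_4$, on $4a$ vertices, built in Lemma~\ref{thm:p2a}) and applies to each degree-$3$ red vertex the transformation~$\mathcal{Q}$: one surrounds the vertex by a triangle of three new vertices inserted between it and its former neighbours. This adds three vertices of degree~$3$ each, preserves planarity and $3$-connectivity, and pushes the red vertex one step further from the rest of the graph. Applying $\mathcal{Q}$ a total of $\frac{k}{2}-1$ times at each of the $a$ red vertices adds $3a\bigl(\frac{k}{2}-1\bigr)$ vertices and increases all pairwise red distances by $k-2$, yielding a polyhedron on $4a+3a\bigl(\frac{k}{2}-1\bigr)=a+\frac{3ak}{2}$ vertices with a $k$-independent set of size $a$.
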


Theorem \ref{thm:pka_even} will be proven in section \ref{sec:k}.

\begin{theorem}\label{thm:pka_odd}
	Let $a \geq 2$ and $k$ be an odd positive integer. Then \[p(k, a) = \left\lceil\frac{3}{2}a+2\right\rceil + 3a\cdot \frac{k-1}{2}.\]
\end{theorem}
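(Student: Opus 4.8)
The plan is to prove Theorem~\ref{thm:pka_odd} by establishing matching upper and lower bounds on $p(k,a)$, building directly on the results already obtained for even $k$ and for the base case $k=1$. The key structural idea is that odd $k$ should behave like the odd base case $k=1$ (which gave the ceiling term $\lceil \tfrac32 a + 2\rceil$) with $(k-1)/2$ additional ``layers'' of cost $3a$ each, exactly mirroring how even $k$ accumulates $k/2$ layers of cost $3a$ on top of the trivial cost $a$ in Theorem~\ref{thm:pka_even}. Concretely, writing $k = 2m+1$, I expect the lower bound argument for even $k$ to carry over to show that each unit increase in the pairwise distance requirement forces roughly $3a/2$ extra vertices, so that going from $k=1$ to $k=2m+1$ adds $2m \cdot \tfrac{3a}{2} = 3am = 3a\cdot\frac{k-1}{2}$ vertices beyond $p_a = \lceil \tfrac32 a + 2\rceil$.

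For the upper bound I would exhibit an explicit polyhedron on $\lceil \tfrac32 a + 2\rceil + 3a\cdot\frac{k-1}{2}$ vertices containing a $k$-independent set of size $a$. The natural construction is to take an extremal graph for the case $k=1$ (e.g.\ a radial graph of a maximal planar graph as in Theorem~\ref{thm:classifextrem}, realizing $p_a$ with an independent set of size $a$) and then ``subdivide'' or ``inflate'' its structure to stretch pairwise distances from $>1$ up to $>k$. Each of the $a$ chosen vertices must be pushed mutually further apart by $k-1$ in distance; I would insert chains of vertices along the relevant paths so that every pair's distance increases by the same amount while maintaining $3$-connectivity and planarity. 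The bookkeeping must show that the total number of newly added vertices is exactly $3a\cdot\frac{k-1}{2}$, and that no unwanted short paths are created between the special vertices.

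For the lower bound I would run a counting or discharging argument analogous to the even case proved in Section~\ref{sec:k}, likely by contracting or collapsing the $(k-1)/2$ outermost layers of distance-balls around the $a$ special vertices to reduce to the $k=1$ problem, where the bound $p_1(a) = \lceil \tfrac32 a + 2\rceil$ applies. The idea is that the balls of radius $(k-1)/2$ around the special vertices are disjoint (since pairwise distances exceed $k = 2\cdot\frac{k-1}{2}+1$), each such ball must contain enough vertices to account for a $3a/2$ contribution per layer by the edge/degree constraints of polyhedra, and the residual ``core'' graph after peeling the layers still needs to support an independent set of size $a$ in a $3$-connected planar way. I anticipate the main obstacle will be the lower bound: controlling the interaction between the layered neighborhoods and the $3$-connectivity constraint simultaneously is delicate, since a naive ball-counting argument tends to lose the ceiling correction $\lceil \tfrac32 a + 2\rceil$ that distinguishes the odd case from the even case. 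Making the peeling reduction preserve planarity and $3$-connectivity---so that the reduced graph is genuinely a polyhedron to which the $k=1$ bound can be applied---is the step I expect to require the most care, and it may be cleaner to instead adapt the direct extremal argument from Section~\ref{sec:k} rather than reduce to Theorem~\ref{thm:pa} as a black box.
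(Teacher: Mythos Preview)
Your plan is correct and matches the paper's approach: the upper bound is obtained by starting from an extremal graph for $p(1,a)$ and applying the transformation $\mathcal{Q}$ (the ``inflation'' you describe) $\tfrac{k-1}{2}$ times at each red vertex, and the lower bound is exactly the peeling/collapsing reduction to Theorem~\ref{thm:pa} that you outline. Two small points: the paper does commit to the black-box reduction you were hesitant about (collapsing the $\tfrac{k-1}{2}$ layers and invoking Theorem~\ref{thm:pa} on the quotient), and for the upper bound you will need the observation (Remark~\ref{prop:deg3}) that an extremal graph for $p_a$ can be chosen with every red vertex of degree exactly~$3$, since $\mathcal{Q}$ only applies at such vertices.
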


The result for $k=1$, already stated in Theorem \ref{thm:pa}, will be used in section \ref{sec:k} to prove the case of odd $k\geq 3$.


\paragraph{Related literature.} For every positive $k$, a graph has a $k$-independent set of size $a=2$ if and only if its diameter is at least $k+1$. Therefore, $p(k,2)$ is the order of the smallest polyhedron with diameter $k+1$. Theorems \ref{thm:pka_even} and \ref{thm:pka_odd} are consistent with Klee's results on the diameter of polyhedral graphs  \cite{klee1964diameters}.
\\
The related question of $k$-independent sets for $t$-connected graphs was investigated by Li and Wu \cite{li_k_2021}. In our setting this corresponds to $t=3$ in their article, but we have the extra constraint of planarity. In the case where $k$ is even, we will see that the result in Theorem \ref{thm:pka_even} matches the bound that they obtain, using a similar construction as they used but with fewer edges to keep the graphs planar. On the other hand, in the odd case, the value of $p(k,a)$ obtained in Theorem \ref{thm:pka_odd} is larger than the result in \cite{li_k_2021}, where they obtain, \textit{using our notation}, $p(k, a) = 3a(k-1)/2 + a + 3$, whereas we get $p(k,a) = 3a(k-1)/2 + \lceil (3a/2) + 2\rceil$. This happens since we are considering a smaller class of $3$-connected graphs.



\paragraph{Acknowledgements.}
S.G. worked on this project as partial fulfilment of his master semester project at EPFL, autumn 2022, under the supervision of R.M.
\\
R.M. was supported by Swiss National Science Foundation project 200021\_184927, held by Prof. Maryna Viazovska.

\section{Independent sets}
\subsection{Proof of Theorem \ref{thm:pa}}
\label{sec:i1}
\begin{notation}
We will use the terminology that a vertex is red if it belongs to the independent set $S$ and blue otherwise. 
\end{notation}

In the two following lemmas, we derive the first five values of $p_a$, to perform induction later in the general case. 
\begin{lemma}
For $a \in \{1, 2, 3, 4\}$ the minimal orders of a polyhedral graph containing an independent set of size $a$ are $p_1 = 4, p_2 = 5, p_3 = 7, p_4 = 8$.  
\end{lemma}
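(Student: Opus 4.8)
The plan is to establish each of the four equalities by a matching lower and upper bound, using the red/blue terminology just introduced (red means belonging to the independent set $S$, blue means not). All four lower bounds will follow from a single counting argument applied to the bipartite subgraph between red and blue vertices, while the upper bounds will be explicit small polyhedra exhibited one value at a time.

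For the lower bound, fix a polyhedron $G$ on $n$ vertices with a red independent set $S$ of size $a$. Since $S$ is independent, every edge incident to a red vertex has a blue endpoint, so the red--blue edges form a bipartite subgraph $H$ of $G$. As a subgraph of a planar graph, $H$ is planar, and being bipartite it is triangle-free; hence $|E(H)| \le 2|V(H)| - 4 \le 2n - 4$. On the other hand, $3$-connectivity forces every vertex, in particular every red one, to have degree at least $3$, and since each red--blue edge is counted exactly once in $\sum_{r \text{ red}} \deg(r)$, we get $|E(H)| \ge 3a$. Combining, $3a \le 2n - 4$, i.e. $n \ge \frac{3}{2}a + 2$, and as $n$ is an integer, $n \ge \lceil \frac{3}{2}a + 2\rceil$. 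Evaluating at $a = 1,2,3,4$ yields $n \ge 4,5,7,8$ respectively, exactly the claimed values.

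For the upper bounds I will exhibit, for each $a$, a polyhedron attaining the bound. For $a=1$ take the tetrahedron $K_4$ (any single vertex is independent). For $a=2$ take the triangular bipyramid on $5$ vertices, whose two apices are non-adjacent. For $a=3$ take the $7$-vertex graph obtained from a $K_4$ on blue vertices $\{b_1,b_2,b_3,b_4\}$ by inserting a red degree-$3$ vertex into three of its four triangular faces; a short count gives $15 = 3\cdot 7 - 6$ edges, so this is a maximal planar graph, hence automatically $3$-connected, and the three inserted vertices form an independent set. For $a=4$ take the radial (vertex--face) graph of $K_4$ on $8$ vertices, consistent with Theorem \ref{thm:classifextrem}: its four face-vertices are pairwise non-adjacent and each has degree $3$, and being the radial graph of a $3$-connected planar graph it is itself $3$-connected.

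The main obstacle is verifying $3$-connectivity, i.e. that each exhibited graph is a genuine polyhedron. For $a=1,2$ this is classical, and for $a=3$ it is immediate because maximal planar graphs on at least four vertices are $3$-connected. The one delicate point is the $a=4$ construction: I would either invoke the standard fact that the radial graph of a $3$-connected planar graph is $3$-connected, or check directly that no two vertices disconnect it. A minor secondary subtlety is that for the odd value $a=3$ the quantity $\frac{3}{2}a + 2$ is not an integer, so one must confirm the construction meets the ceiling $7$ rather than the impossible $6.5$; this is precisely why the explicit $7$-vertex triangulation is required.
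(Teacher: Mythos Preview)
Your argument is correct, but the lower-bound route differs from the paper's. The paper handles $a=1,2$ by inspection and $a=3,4$ via an explicit $K_{3,3}$ obstruction (three red vertices forced to share the same three blue neighbours), whereas you apply the single inequality $3a\le |E(H)|\le 2n-4$ for the bipartite red--blue subgraph. This is exactly the counting argument the paper reserves for the general proof of Theorem~\ref{thm:pa}; you have in effect imported that general bound to dispose of the base cases uniformly, which is cleaner and avoids the case analysis. On the upper-bound side your constructions are essentially the paper's with cosmetic variations: the triangular bipyramid in place of the square pyramid, a stacked $K_4$ triangulation in place of the paper's $G_3$, and the radial graph of $K_4$---which is precisely the cube used in the paper. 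One small caution: your parenthetical appeal to Theorem~\ref{thm:classifextrem} for the $a=4$ case is only motivational, not logical, since that theorem is proved downstream of this lemma; the actual justification you give (radial graphs of $3$-connected planar graphs are $3$-connected, or a direct check that the cube is polyhedral) is what does the work.
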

\begin{proof}
For $a = 1$, the smallest polyhedron is the tetrahedron (Fig. \ref{fig:G1}) yielding $p_1 = 4$. As the tetrahedron is a complete graph and the only polyhedron on 4 vertices, there is no way to find an independent set of size 2, thus $p_2 \geq 5$. The square pyramid (Fig. \ref{fig:G2}) shows that $p_2 = 5$. The graph $G_3$ (Fig. \ref{fig:G3}) shows that $p_3 \leq 7$. Assume for a contradiction that $p_3 = 6$ and that $G$ is a polyhedral graph of order 6 with three red vertices, say $v_1, v_2, v_3$. Then by $3$-connectivity, $v_1, v_2, v_3$ each have to be adjacent to the three blue vertices $v_4, v_5, v_6$. The subgraph generated by these edges yields a copy of $K(3,3)$, contradicting the planarity of $G$ by Kuratowski's Theorem. Thus $p_3 = 7$. The cube (Fig. \ref{fig:G4}) shows that $p_4 \leq 8$. The exact same argument as for $p_3$ shows that $p_4 > 7$ and thus $p_4 = 8$. 
\end{proof}
\begin{figure}[h]
\centering
    \begin{subfigure}{0.2\textwidth}
        \includegraphics[width=\textwidth]{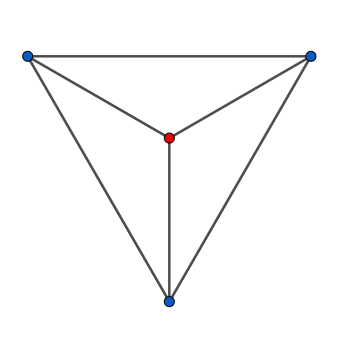}
        \caption{$p = 4,\; a = 1$}
        \label{fig:G1}
    \end{subfigure}
    \hfill
    \begin{subfigure}{0.2\textwidth}
        \includegraphics[width=\textwidth]{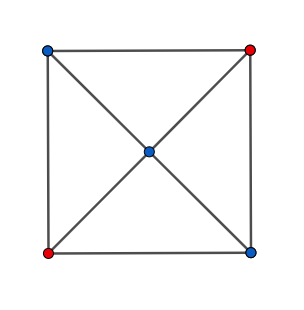}
        \caption{$p = 5,\; a = 2$}
        \label{fig:G2}
    \end{subfigure}
    \hfill
    \begin{subfigure}{0.2\textwidth}
        \includegraphics[width=\textwidth]{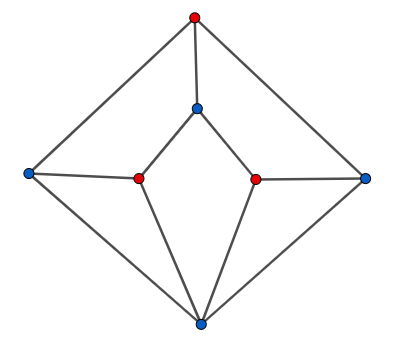}
        \caption{$p = 7,\; a = 3$}
        \label{fig:G3}
    \end{subfigure}
    \hfill
    \begin{subfigure}{0.2\textwidth}
        \includegraphics[width=\textwidth]{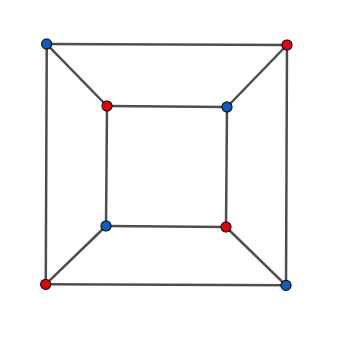}
        \caption{$p = 8,\; a = 4$}
        \label{fig:G4}
    \end{subfigure}
\caption{First four extremal graphs for $p_a$. Vertices in red belong to the independent set.}
\label{fig:Gonetofour}
\end{figure}

\begin{lemma}
It holds that $p_5 = 10$.
\end{lemma}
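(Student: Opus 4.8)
The plan is to establish the two matching inequalities $p_5 \geq 10$ and $p_5 \leq 10$, in the same style as the previous lemma (note that this agrees with the value $\lceil 3\cdot 5/2 + 2\rceil = 10$ predicted by Theorem \ref{thm:pa}, and serves as the odd base case for the induction to come).

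For the lower bound I would show that no polyhedron on at most $9$ vertices can contain an independent set of size $5$. Suppose $G$ were such a graph, with red set $S = \{v_1,\dots,v_5\}$ and at most $4$ blue vertices. By $3$-connectivity each red vertex has degree at least $3$, and since $S$ is independent all of its incident edges run to blue vertices; hence there are at least $3\cdot 5 = 15$ red--blue edges. These edges span a simple bipartite planar subgraph $H$ on at most $9$ vertices, and a simple bipartite planar graph on $v \geq 3$ vertices has at most $2v - 4$ edges. With $v \leq 9$ this allows at most $14$ edges, contradicting the count $\geq 15$. Hence $p_5 \geq 10$. This is the same mechanism as the $K_{3,3}$ argument used for $p_3$ and $p_4$, now phrased through the bipartite edge bound so as to cope with the unbalanced colour split $(5,4)$ rather than the balanced $(3,3)$ case.

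For the upper bound I would exhibit one explicit polyhedron $G_5$ on $10$ vertices carrying an independent set of size $5$. A convenient choice is the vertex--face (radial) graph of the square pyramid: its five vertex-nodes (the apex and the four base vertices) are pairwise non-adjacent, since the radial graph is bipartite with the original vertices on one side, so they form an independent set of size $5$. One checks directly that this graph has exactly $10$ vertices, minimum degree $3$, and is planar and $3$-connected, so it is a genuine polyhedron; such a graph can be drawn explicitly as the figure $G_5$. This gives $p_5 \leq 10$, and combining the two bounds yields $p_5 = 10$.

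I expect the delicate step to be the construction rather than the counting. Since $3F = 2E$ forces every triangulation to have an \emph{even} number of faces, $G_5$ cannot be realised as the radial graph of a maximal planar graph in the clean way available for even $a$ (as in Theorem \ref{thm:classifextrem}); one must instead pick a non-triangulation (here the square pyramid) whose radial graph happens to have exactly $10$ vertices, or else draw an ad hoc $3$-connected planar graph and verify its properties by hand. The lower bound, by contrast, is routine: it is precisely the special case $a = 5$, $n \leq 9$ of the bipartite counting that underlies the general proof of Theorem \ref{thm:pa}.
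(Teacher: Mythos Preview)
Your argument is correct on both halves. For the upper bound, your radial graph of the square pyramid is in fact the very graph the paper exhibits, the pseudo double-wheel $PDW_{10}$: writing the pyramid's vertices as $a,b_1,\dots,b_4$ and its faces as $t_1,\dots,t_4,s$, the radial graph is the $8$-cycle $b_1t_1b_2t_2b_3t_3b_4t_4$ together with one hub $a$ joined to all $t_i$ and another hub $s$ joined to all $b_i$, which is precisely $PDW_{10}$. So the two constructions coincide; only the description differs, and your worry about the ``delicate step'' is unfounded.

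The lower bounds genuinely diverge. The paper pigeonholes on the four blue vertices to find two red vertices with the same three blue neighbours and a third red vertex sharing two of those, then uses $3$-connectivity to route a path completing a subdivision of $K_{3,3}$. Your bipartite edge count ($15$ red--blue edges against the planar bipartite bound $2\cdot 9-4=14$) is shorter, avoids any case analysis, and is exactly the $a=5$ instance of the counting that later drives the general proof of Theorem~\ref{thm:pa}. The paper's route keeps to the explicit Kuratowski style set up for $p_3$ and $p_4$, but yours is the more efficient argument here.
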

\begin{proof}
The pseudo double-wheel $PDW_{10}$ shown in Figure \ref{fig:pdw10} shows that $p_5 \leq 10$. Assume for a contradiction that $p_5 < 10$, namely that there exists a graph $G$ of order 9 with an independent set $S \subset V$ of size 5. Label the other vertices with $A, B, C, D$. Then by $3$-connectivity and because there are 4 ways to choose three blue vertices among $V \setminus S$, at least two red vertices $v_1, v_2$ share three common neighbors (say $A, B, C$) and another red vertex $v_3$ shares two common neighbors with them (say $A,C$). By $3$-connectivity, $v_3$ is adjacent to $D$ and $D$ and $B$ are connected by a path that doesn't go through $v_3$. Putting it all together, we get that the subgraph generated by all the edges mentioned above is homeomorphic from $K(3,3)$ (the bipartition being given by $\{A, B, C\}, \{v_1, v_2, v_3\}$), contradicting the planarity of $G$ by Kuratowski's Theorem. We conclude that $p_5 = 10$ as desired. 
\end{proof}
\begin{figure}[h]
    \centering
    \includegraphics[width=0.5\textwidth]{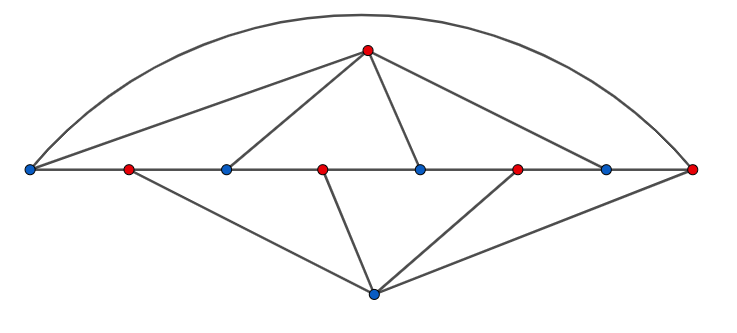}
    \caption{$PDW_{10}$, an extremal graph with $p = 10, \; a = 5$.}
    \label{fig:pdw10}
\end{figure}

At this point one might extrapolate that $p_{a+2} = p_a + 3$ for every $a\geq 1$, and thus the progression is linear. In a moment, we will prove Theorem \ref{thm:pa} confirming this claim.

Before getting there, we introduce a transformation that will play a major role in the proof, and that also gives some intuition as to why this progression occurs. 
Given a planar graph $G$ and a subgraph $H$ on $6$ vertices forming two adjacent quadrilateral faces in $G$, we consider the transformation $\mathcal{P}$ displayed in Figure \ref{fig:transfoP}. It preserves planarity and $3$-connectivity, since it is obtainable via three consecutive applications of the transformation $\mathcal{P}_1$ from \cite[Fig. 3]{brinkmann_generation_2005}. Applying $\mathcal{P}$ transforms the two adjacent quadrilateral faces $H$ on $6$ vertices ino another planar subgraph $H'$ on $9$ vertices with 5 quadrilateral regions.

The subgraph $K$ of $H'$ formed by the two rightmost faces in Figure \ref{fig:transfoP} is isomorphic to $H$ and we can therefore apply $\mathcal{P}$ to $K$. Proceeding like that, we see that if we can apply the transformation $\mathcal{P}$ once to a graph, we can apply it as many times as we want.
\begin{figure}[h]
    \centering
    \includegraphics[width=0.5\textwidth]{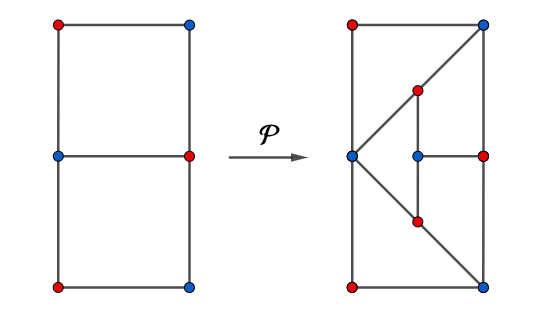}
    \caption{The transformation $\mathcal{P}$}
    \label{fig:transfoP}
\end{figure}


\begin{proof}[Proof of Theorem \ref{thm:pa}]
We start by showing that $p_a \leq \left\lceil(3a/2)+ 2\right\rceil$ by building for any $a$ a polyhedral graph $G$ on $p$ vertices such that $p=\left\lceil(3a/2)+ 2\right\rceil$. Take any integer $a$. For $a \leq 5$, $G$ is given by the results above. For $a \geq 6$, consider the transformation $\mathcal{P}$ described in Figure \ref{fig:transfoP}. If a graph $G$ has $p$ vertices and an independent set of size $a$, $\mathcal{P}(G)$ has $p' = p+3$ vertices and an independent set of size $a' = a + 2$. Thus if $G$ satisfies $p = \left\lceil(3a/2) + 2\right\rceil$ then 
\begin{equation*}
  p' = p + 3 = \left\lceil\frac{3}{2}a + 2\right\rceil + 3 = \left\lceil\frac{3}{2}a + 5\right\rceil = \left\lceil\frac{3}{2}(a+2) + 2\right\rceil = \left\lceil\frac{3}{2}a' + 2\right\rceil  
\end{equation*}
and $\mathcal{P}(G)$ satisfies the same relation. As the cube ($a=4$) and the pseudo double-wheel $PDW_{10}$ ($a=5$) both satisfy the relation and both have at least two adjacent quadrilateral faces, we can apply $\mathcal{P}$ as many times as needed to the cube (if $a$ is even) or to $PDW_{10}$ (if $a$ is odd) to get a polyhedron on $p$ vertices with an independent set of size $a$ such that $p = \left\lceil(3a/2)+ 2\right\rceil$. 

We now show that $p_a \geq \left\lceil(3a/2)+ 2\right\rceil$. Suppose for a contradiction that there is a counterexample $G$ on $p$ vertices with an independent set of size $a$ such that $p < \left\lceil(3a/2)+ 2\right\rceil$. Without loss of generality, we may assume that $G$ does not have any face containing more than two blue vertices. Indeed, if there were one, representing $G$ with that face on the outer boundary and adding a new red vertex outside of $G$ connected to three of these blue vertices yields a new polyhedron $G'$, containing strictly less faces with three blue vertices, and such that $p' = p + 1$ and $a' = a + 1$, thus still satisfying $p' < \left\lceil(3a'/2)+ 2\right\rceil$. Therefore, we may assume that $G$ contains only quadrilateral faces with alternating vertex colours, and triangular faces with one red and two blue vertices. In particular, this implies that all triangular faces must come in pairs sharing a common base formed by a blue-blue edge. Indeed, if the face on the other side of the blue-blue edge of a triangle was a quadrilateral face, it would have to contain three blue vertices. 

Now that our counterexample $G$ is of the desired form, we delete all the blue-blue edges shared by adjacent triangular faces, if any. We are left with a quadrangulation of the sphere, that we denote by $H$, with the same number of vertices $p$ and as many or fewer edges, say $q_H$. Note that while the degrees of some blue vertices may have decreased to $2$ in the process, the red vertices still all have degree greater or equal to $3$ (by $3$-connectivity of $G$). On one hand, as all edges left in the graph are red-blue edges, it holds that \[q_H = \sum_{v \in S}\deg v \geq 3 |S| = 3a.\]
On the other hand, as all faces are quadrilaterals, we have that $2q_H = 4r$, where $r$ is the number of regions. Combining this with Euler's formula, we get that \[q_H = 2p - 4.\]
The two above statements imply that \[2p -4 \geq 3a \quad \text{i.e.} \quad p \geq \frac{3}{2}a + 2.\]

We therefore get that $p$ is an integer such that $(3a/2)+ 2 \leq p < \left\lceil(3a/2)+ 2\right\rceil$, 
contradiction. 
\end{proof}

\subsection{Extremal graphs: proof of Theorem \ref{thm:classifextrem}}
\label{sec:i2}
In the present section, we investigate what the extremal graphs of Theorem \ref{thm:pa} look like in the case of $a$ even and $a$ odd respectively. The structure of the even case yields a characterisation of extremal graphs as vertex-face graphs of a maximal planar graph. 


\begin{proof}[Proof of Theorem \ref{thm:classifextrem}]
Assume that $G$ is a polyhedron on $p = (3a/2)+ 2$ vertices with an independent set of size $a$, i.e. $G$ contains $a$ red and $p-a$ blue vertices. We claim that $G$ is a quadrangulation of the sphere without any separating $4$-cycles and as such the vertex-face graph of some graph $H$ (cf. \cite[section 3.1]{maffucci_self-dual_2022}). Indeed, no face in $G$ can contain more than $2$ blue vertices, for otherwise, we could connect them all to a new vertex, which would yield a polyhedron on $(3a/2)+ 3$ vertices with an independent set of size $a+1$, which contradicts Theorem \ref{thm:pa} (since $a$ is even). As no face has more than $2$ blue vertices, each face has to be either a square with alternating vertex colours or a triangle, with blue-blue edge shared with another triangle. But then if such a pair of triangles occurred, we could remove this blue-blue edge and get a graph with less edges, contradicting the minimality of edges. Thus we indeed have a quadrangulation of the sphere. 

We now show that $G$ has no separating 4-cycles. Suppose by contradiction that $G$ admits a separating 4-cycle $C$ and consider $G_1$ and $G_2$ the non-empty connected components that remain after removing $C$ and let $\overline{G}_i$ be the subgraph generated by  $V(G_i) \cup V(C)$. Say that $G_i$ contains $p_i$ vertices, $a_i$ of which are red, while $C$ contains $4$ vertices, $2$ of which are red. We claim that the $\overline{G}_i$ are $3$-connected. Suppose by contradiction that $\overline{G}_1$ is not $3$-connected. Then it admits a $2$-cut set $\{x, y\}$, with $x$ and $y$ not both in $C$, for otherwise $C$ would be a separating 4-cycle in $\overline{G}_1$ and as such not a face, which it clearly is. They cannot be both inside $G_1$ for otherwise they would be a $2$-cut set in $G$, which is $3$-connected. Finally, it cannot be that $x$ lies on $C$ and $y$ inside $G_1$: indeed if this were the case, take any $u,v$ in different connected components of $G_1 -x - y$. As $G$ is $3$-connected, there exists a $uv$-path $P_{uv}$ in $G - x - y$, which must pass through some vertex $z \in G_2$. The path $P_{uv}$ has thus to cross $C$ in at least two different vertices, say $w_1, w_2$. Then, as only one vertex in $C$ was removed, we can reach $w_2$ from $w_1$ via $C$, which yields a $uv$-path in $G_1 - x - y$. Thus $\overline{G}_1$ is $3$-connected. An analogous argument shows that $\overline{G}_2$ is also $3$-connected. 
\begin{figure}[h]
    \centering
    \includegraphics[width=0.75\textwidth]{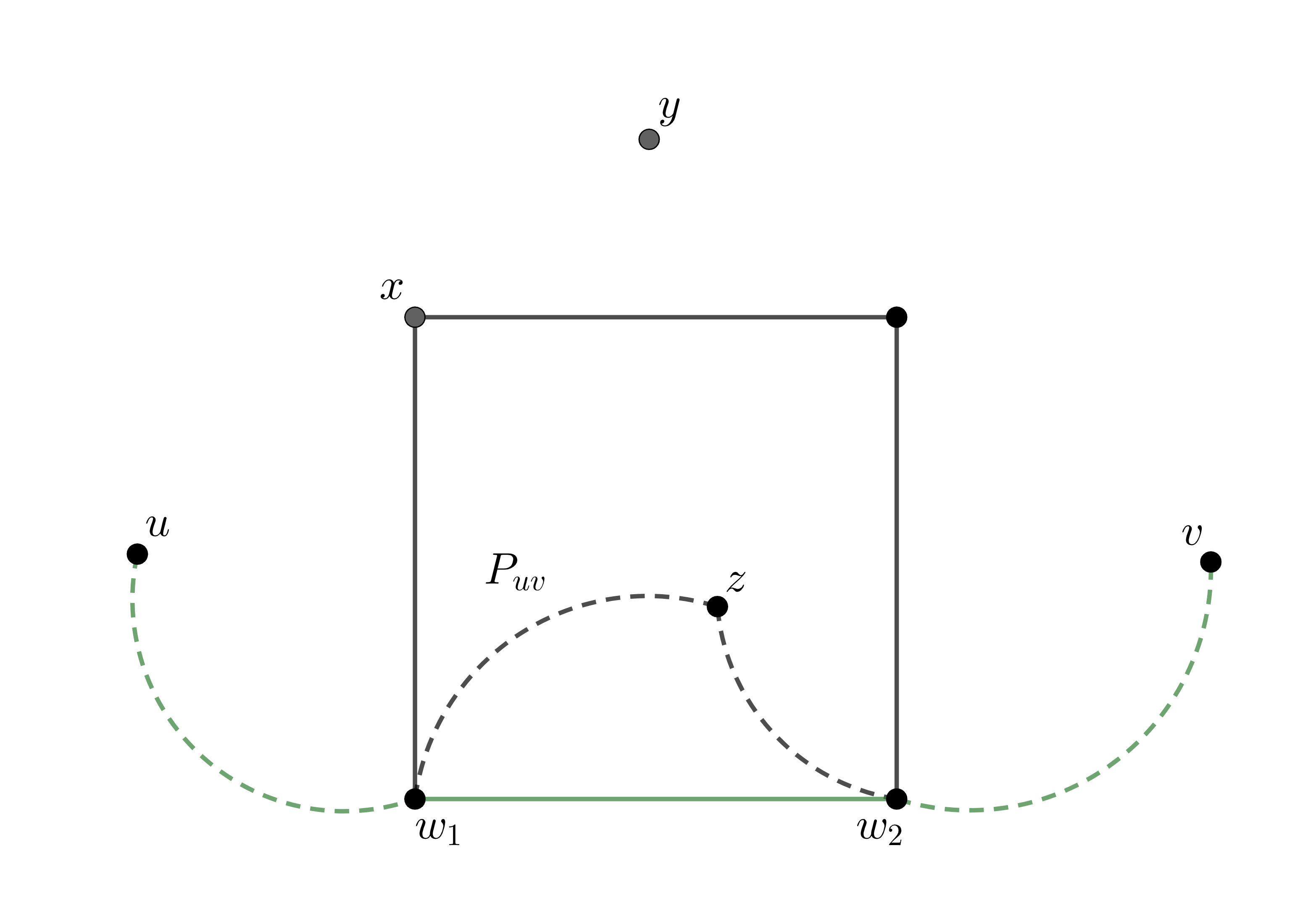}
    \caption{What would happen if $\{x,y\}$ is a $2$-cut set such that $x\in C$ and $y \in G_1$. The dashed line shows the $uv$-path in $G - x - y$ and the green path the resulting $uv$-path in $G_1 - x - y$. Note that curved lines might represent a path formed by more than one edge and that not all edges are displayed.}
    \label{fig:sep4cycle}
\end{figure}

Now, as $\overline{G}_1$ is $3$-connected planar with $a_1$ red vertices it holds by Theorem \ref{thm:pa} that $p_1 + 4 \geq \frac{3}{2}(a_1 + 2) + 2 $. Likewise, $p_2 + 4 \geq \frac{3}{2}(a_2 + 2) + 2$. Summing both expressions, we get 
\[ p_1 + 4 + p_2 + 4 \geq \frac{3}{2}(a_1 + 2 + a_2 + 2) + 4  \quad \text{i.e.} \quad p \geq (3a/2)+ 3,\]
where we used that $p = p_1 + p_2 + 4$ and $a = a_1 + a_2 + 2$. We reach a contradiction and therefore $G$ doesn't have any separating 4-cycle. Thus $G$ is the vertex-face graph of some graph $H$ on $p-a$ vertices containing $a$ regions \cite[Lemma 2.8.2]{mohar_graphs_2001}.

It remains to show that $H$ is maximal planar. By Euler's formula, we have that $q_H = p-2$ and by definition of $p$ we get 
\[p_H = p-a = (3a/2)+ 2 - a = \frac{1}{2}a + 2, \qquad q_H = \frac{3}{2}a + 2 - 2 = \frac{3}{2}a, \qquad r_H = a.\]
From there we can see in two equivalent ways that $H$ is maximal planar. One way is to note that $2q_H = 3r_H$ and as such every region in $H$ is bounded by a triangle. We can also check the relation between vertices and edges, which is satisfied as $3p_H - 6 = (3a/2)+ 6 - 6 = (3a/2)= q_H$. Therefore $H$ is maximal planar, and the first direction holds.

Assume now that $H$ is a $(p', q', r')$ maximal planar graph such that $r' = a$. We consider its vertex-face graph $G$, which is a polyhedron on $p = p' + r'$ vertices \cite[Lemma 2.1]{archdeacon_construction_1992}. As $H$ is a triangulation, $2q' = 3r'$, i.e. $q' = (3a/2)$. Euler's formula yields $p' + r' = q' + 2$, that is $p = (3a/2)+ 2$, which proves the result. 
\end{proof}

We turn to the odd case, where we show that after removing blue-blue edges, the extremal graph is a quadrangulation of the sphere, up to adding one red-blue edge. Similarly to the proof of Theorem \ref{thm:classifextrem}, we use the relation between $p_a$ and $p_{a+1}$ obtained in Theorem \ref{thm:pa}, in order to determine what the different faces must look like. 

\begin{cor}
Let $a \geq 3$ be a fixed odd integer. Let $G$ be any polyhedron on $p_a$ vertices containing an independent set of size $a$ and $H$ the graph obtained by removing all blue-blue edges in $G$. Then either $H$, or $H'$ obtained from $H$ by adding one red-blue edge, is a $2$-connected quadrangulation. 
\end{cor}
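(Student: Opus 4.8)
The plan is to adapt the structure of the proof of Theorem \ref{thm:classifextrem} to the odd case, where the bound $p_a = \lceil (3a/2) + 2\rceil = (3a+5)/2$ (for odd $a$) means the counting is no longer perfectly tight: we have one extra ``half vertex'' of slack compared to the even case. First I would argue, exactly as in the even case, that no face of $G$ can contain three or more blue vertices, since otherwise connecting them to a new external red vertex would produce a polyhedron on $p_a + 1$ vertices with an independent set of size $a+1$, contradicting the value $p_{a+1}$ from Theorem \ref{thm:pa} (here one must check that $p_a + 1 < \lceil (3(a+1)/2) + 2\rceil$, which holds precisely because $a$ is odd, so $a+1$ is even and the ceiling jumps). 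Consequently every face is either a quadrilateral with alternating colours or a triangle with one red and two blue vertices, and as before each such triangle must be paired with another triangle across its blue-blue edge.

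Next I would remove all blue-blue edges to obtain $H$, leaving a graph whose faces are all quadrilaterals except that each deleted pair of triangles merges into a single quadrilateral-bounded region. The key new feature of the odd case is a parity/counting obstruction: performing the edge count $q_H = \sum_{v \in S} \deg v \geq 3a$ against $2q_H = 4r$ and Euler's formula forces the inequality to be nearly but not exactly tight. Specifically, because $p = (3a+5)/2$ is a half-integer away from $(3a/2)+2$, the quadrangulation relation $q_H = 2p - 4$ cannot hold on the nose for every vertex having degree exactly meeting the bound; I expect exactly one red vertex to have degree $4$ (or one blue vertex to retain an ``extra'' incidence) rather than all red vertices sitting at the minimum. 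Tracking this discrepancy should show that $H$ is a quadrangulation everywhere except for a single non-quadrilateral face, which is bounded by a region that becomes a quadrilateral precisely upon inserting one red-blue edge.

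The cleanest route to the final statement is therefore: after the face analysis, count faces in $H$. If every face of $H$ is already a quadrilateral, then $H$ itself is the desired quadrangulation. Otherwise the leftover slack (one unit, coming from the fractional part of $3a/2$) concentrates in a single face that is not a $4$-cycle; I would show this face must be a hexagon or a face supporting a diagonal, so that adding one red-blue chord $H' = H + e$ splits it into two quadrilaterals and makes $H'$ a quadrangulation. Finally I would verify $2$-connectivity: since $G$ is $3$-connected and we only delete blue-blue edges (which lie on paired triangles and hence are not bridges or cut-edges in the relevant sense), the resulting $H$ (or $H'$) cannot be disconnected by removing a single vertex; the red vertices retain degree $\geq 3$ by $3$-connectivity of $G$, and the only degree drops are among blue vertices, which cannot create a cut-vertex in a quadrangulation of the sphere.

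The hard part will be pinning down \emph{where} the single unit of slack lives and proving it yields exactly one defective face repairable by one edge, as opposed to being spread across several faces in a way that would require adding more than one edge. This requires a careful simultaneous accounting of vertex degrees and face sizes: I would set up the equations $q_H \geq 3a$, $p = (3a+5)/2$, and Euler's formula $p - q_H + r = 2$, and argue that the integrality constraints on $r$ and on the degree sum leave essentially one degree of freedom, forcing precisely one face of size $6$ (or one pair of vertices at ``distance'' allowing a single chord) and all others of size $4$. Verifying $2$-connectivity rather than $3$-connectivity after edge deletion is the other subtlety, since removing blue-blue edges can genuinely lower connectivity; the argument must use that these edges come in triangle pairs to rule out the creation of cut-vertices.
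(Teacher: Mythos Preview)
Your first step contains a computational error that undermines the whole approach. You claim that for odd $a$ one has $p_a + 1 < p_{a+1}$ ``because the ceiling jumps,'' but in fact for odd $a$ we have $p_a = (3a+5)/2$ and $p_{a+1} = 3(a+1)/2 + 2 = (3a+7)/2$, so $p_a + 1 = p_{a+1}$ exactly. Hence adding one red vertex inside a face with three blue vertices produces a polyhedron on $p_{a+1}$ vertices with an independent set of size $a+1$, which is \emph{not} a contradiction to Theorem~\ref{thm:pa}. A face with three blue vertices can therefore genuinely occur in an extremal $G$, and this is precisely the source of the single hexagonal face that may appear in $H$ and that requires the extra red--blue edge.

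The paper's argument handles this correctly by making a weaker claim: it shows there is \emph{at most one} face with three blue vertices (if there were two, one could add \emph{two} new red vertices and obtain a polyhedron on $p_a + 2 < p_{a+2}$ vertices with an independent set of size $a+2$, and this inequality does hold for odd $a$), and that such a face has at most three blue vertices. That single exceptional face, after the adjacent triangles across its blue--blue edges are absorbed, always becomes a hexagon with alternating colours; one red--blue diagonal then splits it into two quadrilaterals. Your attempt to locate the slack purely through the edge/degree count after already asserting all faces are quadrilaterals or triangle pairs is circular: under that assertion $H$ would already be a quadrangulation and the slack would simply manifest as one red vertex of degree $4$, with no defective face at all and no need for $H'$. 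The defective face arises exactly when the ``no three blue vertices'' claim fails, which you have ruled out by the mistaken inequality.
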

\begin{proof}
Since $a$ is odd, we know by Theorem \ref{thm:pa} that there is in $G$ at most one face $F$ containing three blue vertices, as otherwise we would be able to add two red vertices by connecting them to the three blue vertices of two different faces, getting a polyhedral graph on $p_a + 2$ vertices with an independent set of size $a + 2$. By a similar argument, this face cannot contain more than three blue vertices. We are left with four possibilities for what $F$ looks like: a triangle with no red vertices, a square with one red vertex, a pentagon with two red vertices or a hexagon with alternating vertex colours (Figure \ref{fig:Fposs}). Since all faces other than $F$ have at most two blue vertices, we know that there must be a triangle lying on the other side of each blue-blue edge in $F$. Therefore, when removing the blue-blue edges, we are left in any case with a hexagon of alternating vertex colours. Adding an edge from one red vertex to the opposite blue turns the hexagonal face into two quadrilateral faces. By an argument that already appeared in the proof of Theorem \ref{thm:classifextrem} all other faces are quadrilateral or make up a pair of triangles sharing a blue-blue edge. Therefore the graph $H'$ obtained from $G$ by removing all blue-blue edges and potentially adding one edge to a hexagonal face is a $2$-connected quadrangulation.
\end{proof}
\begin{figure}[h]
\centering
    \begin{subfigure}{0.24\textwidth}
        \includegraphics[width=\textwidth]{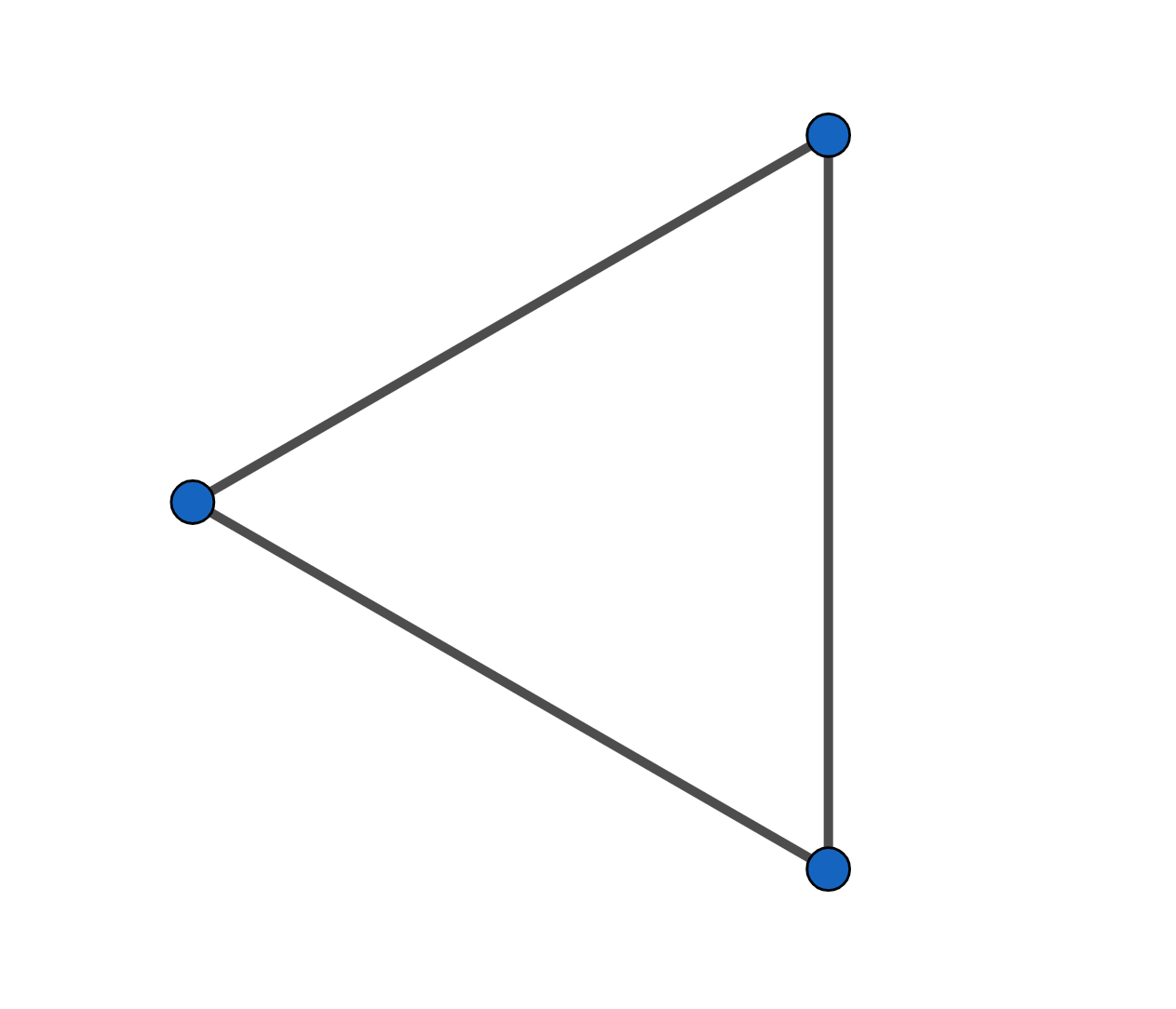}
        \label{fig:3bTri}
    \end{subfigure}
    \hfill
    \begin{subfigure}{0.24\textwidth}
        \includegraphics[width=\textwidth]{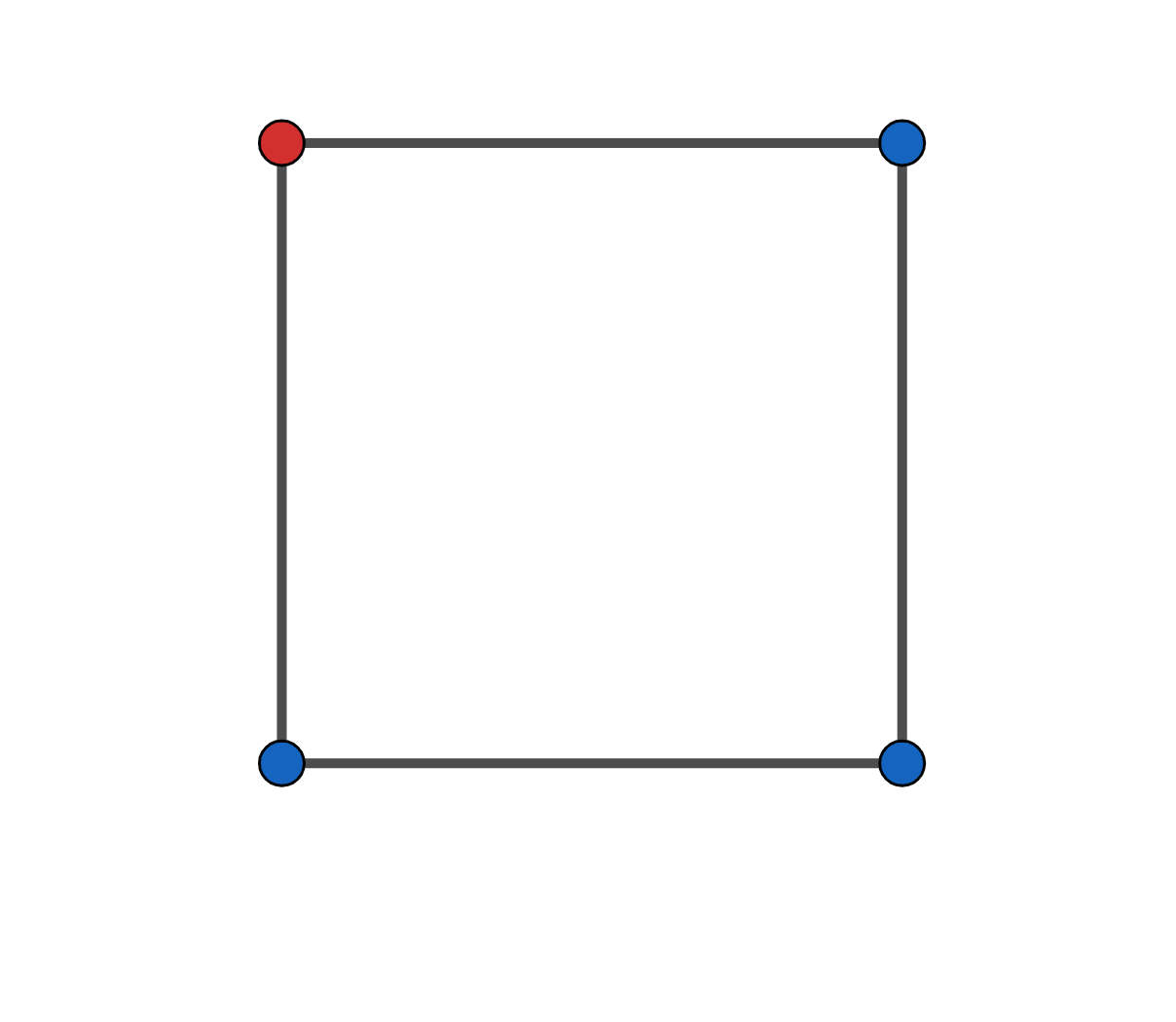}
        \label{fig:3bSqu}
    \end{subfigure}
    \hfill
    \begin{subfigure}{0.24\textwidth}
        \includegraphics[width=\textwidth]{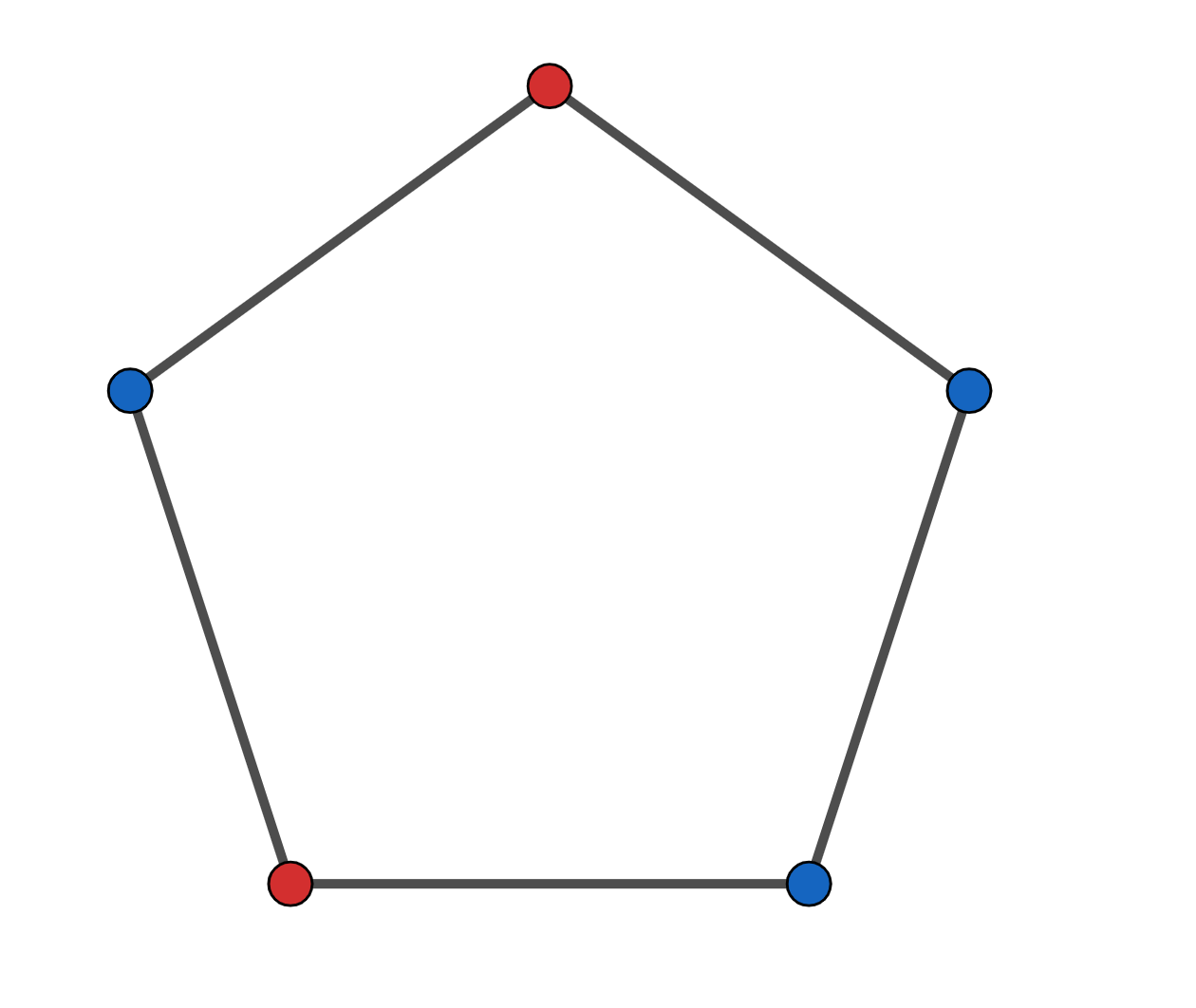}
        \label{fig:3bPen}
    \end{subfigure}
    \hfill
    \begin{subfigure}{0.24\textwidth}
        \includegraphics[width=\textwidth]{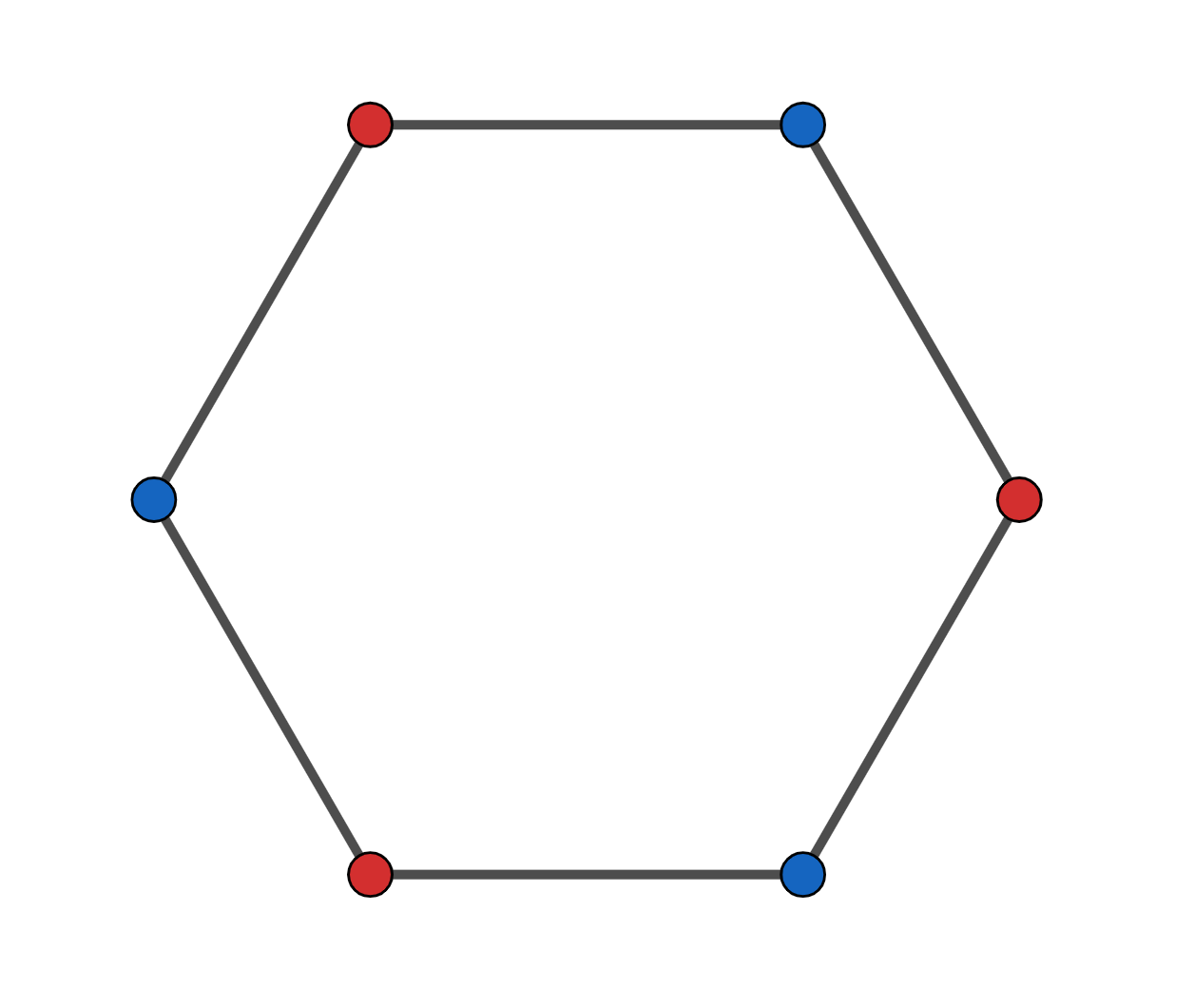}
        \label{fig:3bHex}
    \end{subfigure}
    \hfill
    \begin{subfigure}{0.24\textwidth}
        \includegraphics[width=\textwidth]{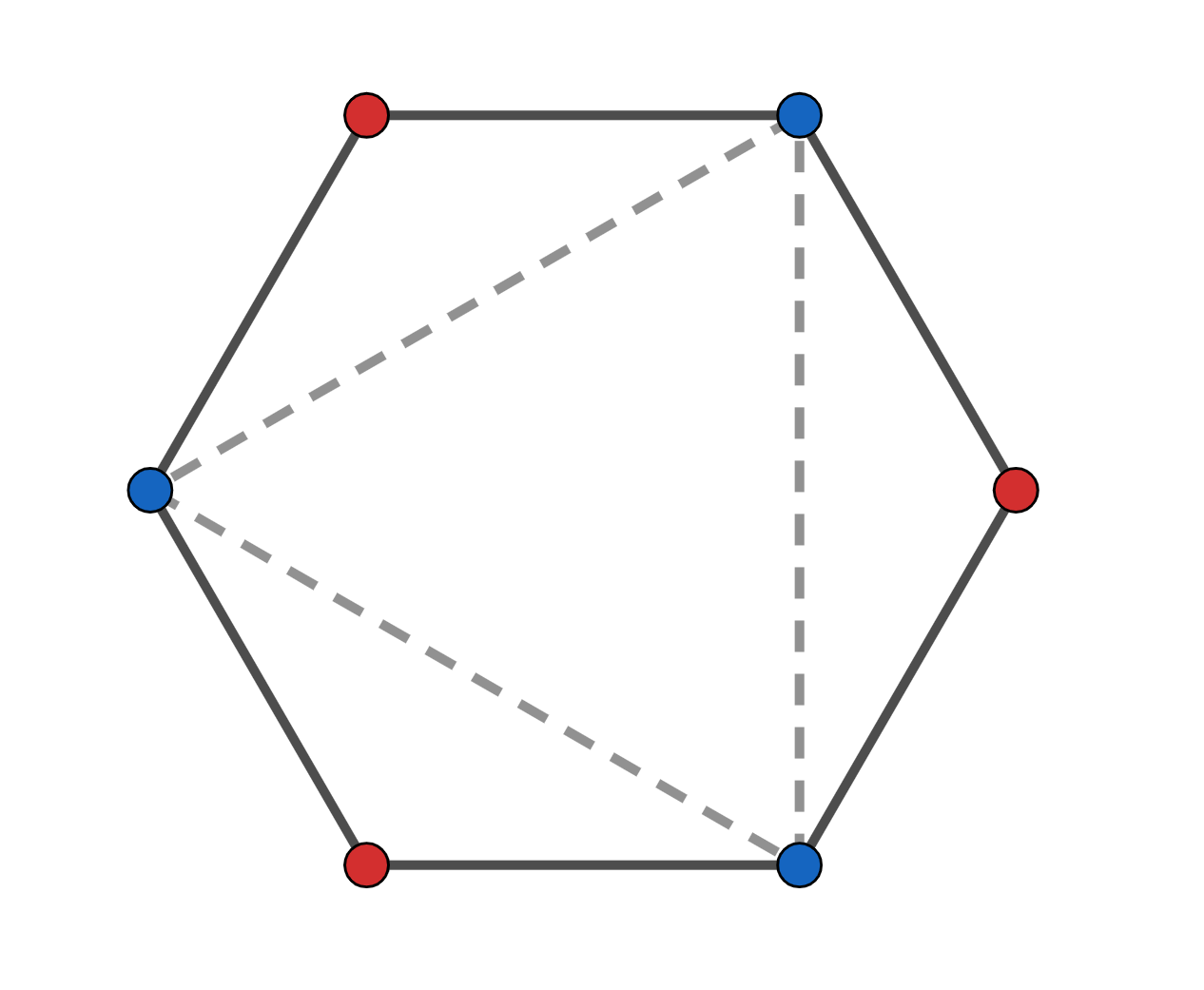}
        \label{fig:TriHex}
    \end{subfigure}
    \hfill
    \begin{subfigure}{0.24\textwidth}
        \includegraphics[width=\textwidth]{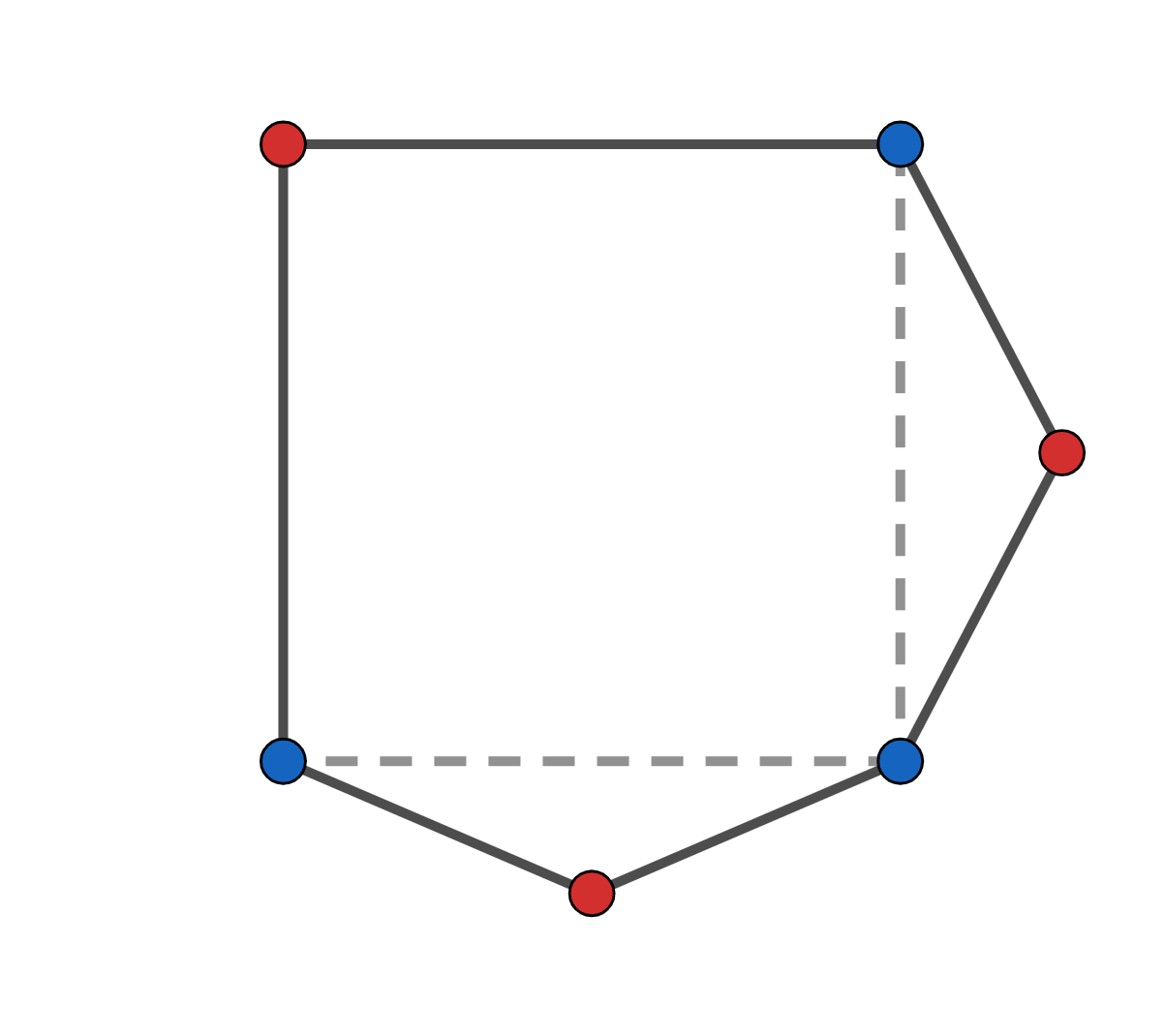}
        \label{fig:SquHex}
    \end{subfigure}
    \hfill
    \begin{subfigure}{0.24\textwidth}
        \includegraphics[width=\textwidth]{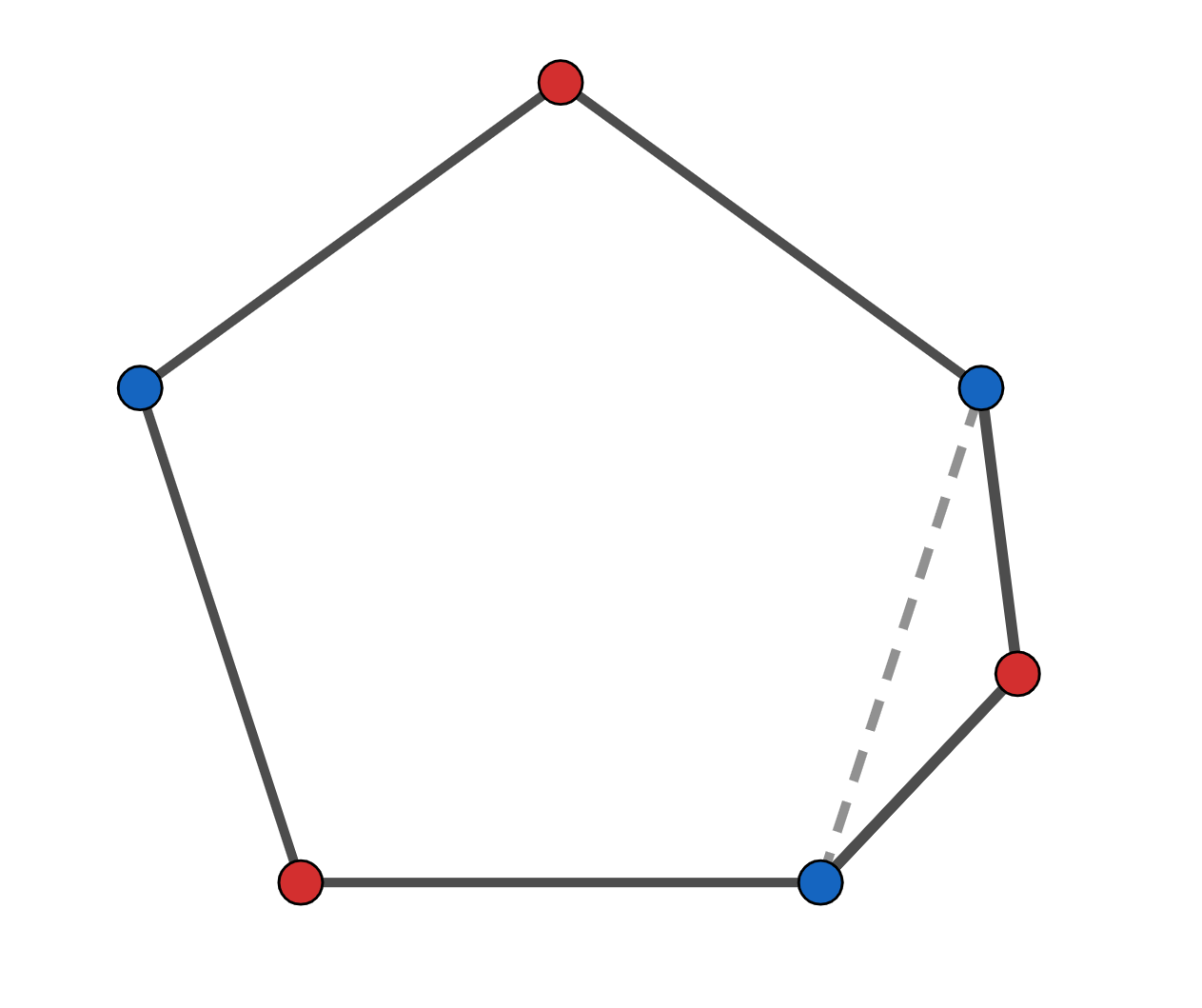}
        \label{fig:PenHex}
    \end{subfigure}
    \hfill
    \begin{subfigure}{0.24\textwidth}
        \includegraphics[width=\textwidth]{Figures/hexagon.png}
        \label{fig:Hex}
    \end{subfigure}
\caption{The four possibilities for the face $F$ with three blue vertices (top row). When we consider what lies on the other side of blue-blue edges, we notice that all cases boil down to the hexagonal case after removing the blue-blue edges (bottom row).}
\label{fig:Fposs}
\end{figure}

We now record a property of the extremal graphs that will be useful in the next section.
\begin{remark}
\label{prop:deg3}
For any $a$, there exists an extremal graph for $p_a$ such that every red vertex has degree 3. Indeed, this holds for $a \leq 4$ by the examples in Figure \ref{fig:Gonetofour}. For $a \geq 5$, we get the desired graph by repeated application of the transformation $\mathcal{P}$ to either $G_3$ or $G_4$ depending on the parity of $a$. We may apply the transformation since both graphs contain two adjacent squares. Further, $\mathcal{P}$ inserts two red vertices of degree $3$ while keeping the degree of the other red vertices unchanged. Therefore the graph obtained satisfies $\deg(v) = 3$ for any $v \in S$ as desired. 
\end{remark}

\section{k-Independent sets}
\label{sec:k}
In this section, we turn our attention to the more general notion of $k$-independent sets, that is a subset $S$ of vertices such that the mutual distance between any pair is always strictly larger than $k$. The case $k = 1$ is precisely the notion of independence investigated in the previous section. We will focus on finding the smallest order $p$ of a polyhedral graph containing a $k$-independent set of size $a$. After treating explicitly the case $k = 2$, we will use it along with the results proven in the previous section and an inductive argument to find a closed formula for $p$ depending on $k$ and $a$.

We start with the case $k=2$. The $3$-connectivity of polyhedral graphs readily yields a lower-bound for $p(2,a)$, which can be matched by building a family of graphs with the desired properties. 
\begin{lemma}\label{thm:p2a}
For every $a \geq 1$, we have $p(2, a) = 4a$.
\end{lemma}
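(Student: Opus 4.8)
The plan is to prove the equality $p(2,a)=4a$ by establishing the two inequalities $p(2,a)\le 4a$ (a construction) and $p(2,a)\ge 4a$ (a lower bound forced by $3$-connectivity).

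For the upper bound I would exhibit an explicit family of polyhedra on $4a$ vertices admitting a $2$-independent set of size $a$. The natural idea is to place the $a$ red vertices ``far apart'' and surround each with its own private neighbourhood. Since we need any two red vertices at distance $>2$, no two reds may share a neighbour, so each red vertex needs a set of neighbours disjoint from those of every other red. The cheapest way to do this while keeping $3$-connectivity is to give each red vertex exactly three private blue neighbours, using $3a$ blue vertices in total plus the $a$ reds, for $4a$ vertices. Concretely I would build something like a ``blown-up cycle'': arrange $a$ triangles (the private blue neighbourhoods) in a cyclic pattern, cap each triangle with a red apex, and connect consecutive blue triangles to one another so that the whole graph is planar and $3$-connected. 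One should verify that the resulting graph is planar, $3$-connected, and that the distance between any two red apexes is at least $3$ (a path from one red must leave its triangle, traverse at least one intermediate blue vertex, and enter the other triangle). The case $a=1$ is just the tetrahedron.

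For the lower bound, the key observation is that in a $2$-independent set the closed neighbourhoods of the red vertices are pairwise \emph{disjoint}: if two reds $u,v$ had a common neighbour $w$, then $d(u,v)\le 2$, contradicting $2$-independence; moreover a red vertex cannot be adjacent to another red vertex. Hence $V(G)$ contains the $a$ red vertices together with $a$ disjoint neighbourhoods $N(v_1),\dots,N(v_a)$. By $3$-connectivity every red vertex has degree at least $3$, so each $|N(v_i)|\ge 3$, and these neighbourhoods consist entirely of blue vertices disjoint across different $i$. This already gives at least $a + 3a = 4a$ vertices, yielding $p(2,a)\ge 4a$.

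The main obstacle, as usual in this type of extremal argument, is the construction for the upper bound rather than the lower bound: I must produce a genuinely \emph{planar} and \emph{$3$-connected} graph on exactly $4a$ vertices, not merely a $3$-regular-on-the-reds graph with disjoint neighbourhoods. The disjointness of neighbourhoods is easy to arrange abstractly, but wiring the $a$ private triangles together so that the global graph stays planar and $3$-connected, while not accidentally creating a shortcut that brings two red vertices to distance $2$, requires care; a cyclic ``prism-like'' or ``antiprism-like'' linking of the triangles is the cleanest candidate, and I expect the verification of $3$-connectivity (e.g. exhibiting three internally disjoint paths between any two vertices, or checking there is no $2$-cut) to be the most delicate step. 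The lower bound, by contrast, follows almost immediately from the disjoint-neighbourhood observation combined with the minimum degree $3$.
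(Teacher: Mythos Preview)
Your proposal is correct and follows essentially the same route as the paper: the lower bound via disjoint closed neighbourhoods and minimum degree $3$ is exactly the paper's argument, and your ``triangles capped by a red apex, linked cyclically'' is precisely the paper's construction of $a$ copies of $K_4$ joined in a cycle (with the apex playing the role of $A_i$). The paper's verification of $3$-connectivity is also brief---it simply notes three internally disjoint paths through the $B_i$'s versus the $C_i/D_i$'s---so the step you flag as most delicate is in fact quite short once the linking edges are specified.
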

\begin{proof}
For $a = 1$, we have already noted that $p(2, 1) = 4 = 4a$. Now in general, it holds that $p(2, a) \geq 4a$ as each of the $a$ vertices in $S$ has to be adjacent to at least $3$ vertices by $3$-connectivity, and those vertices cannot be adjacent to any of the other vertices in $S$. In other words, denoting by $N_i$ the set of vertices at distance $1$ from $v_i \in S$, we get that all the $N_i$ are disjoint. Therefore,
\[
|V| \geq |S| + \sum_{i=1}^a|N_i| \geq a + 3a = 4a.
\]

To match this bound, we consider a family of graphs $G_a$: we start by taking $a$ copies of $K_4$, labelling the four vertices in copy $i$ by $A_i$, $B_i$, $C_i$ and $D_i$. We connect the copies together as follows: for any $i \in \{1, \dots, a\}$, add an edge between $B_i$ and $B_{i+1}$, and between $C_i$ and $D_{i+1}$ (where the non-existing index $a+1$ is to be understood as $1$) -- see Figure \ref{fig:p2atwoconstr}. The graph $G_a$ obtained is clearly planar and is $3$-connected since one can get from any vertex $u$ to any other vertex $v$ by two disjoint paths using the vertices $C_i$, $D_i$ or by a third disjoint path using the vertices $B_i$. Furthermore, the vertices $A_i$ form a $2$-independent set of size $a$, since they are at distance at least $3$ from one another. Therefore the $G_a$ are indeed polyhedral graphs on $4a$ vertices with a $2$-independent set of size $a$ and we conclude that $p(2, a) = 4a$.

\end{proof}
\begin{figure}[h]
\centering
    \begin{subfigure}{0.275\textwidth}
    \includegraphics[width=\textwidth]{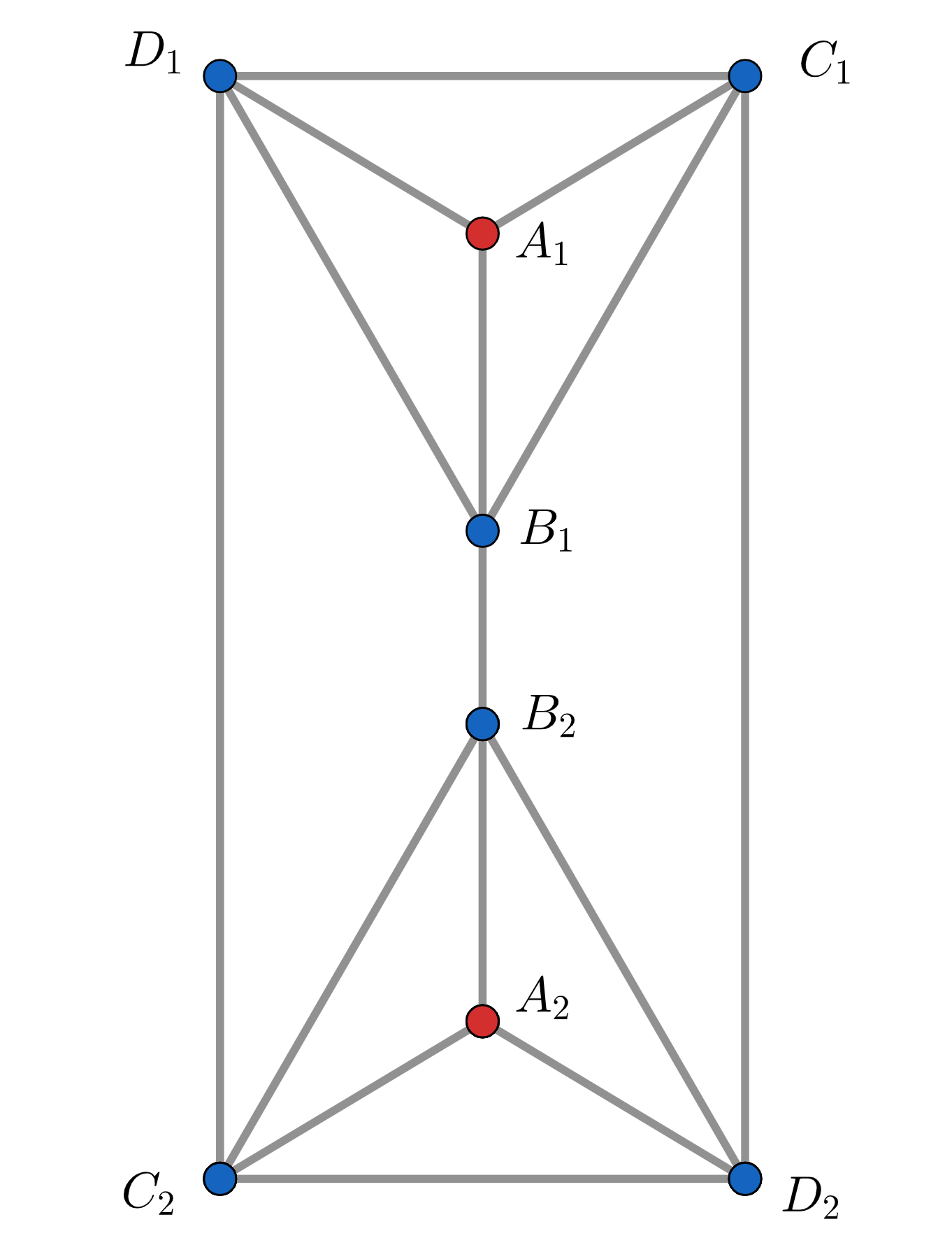}
        \label{fig:a=2}
        \caption{The graph $G_a$ for $a=2$.}
    \end{subfigure}
    \hfill
    \begin{subfigure}{0.45\textwidth}
    \includegraphics[width=\textwidth]{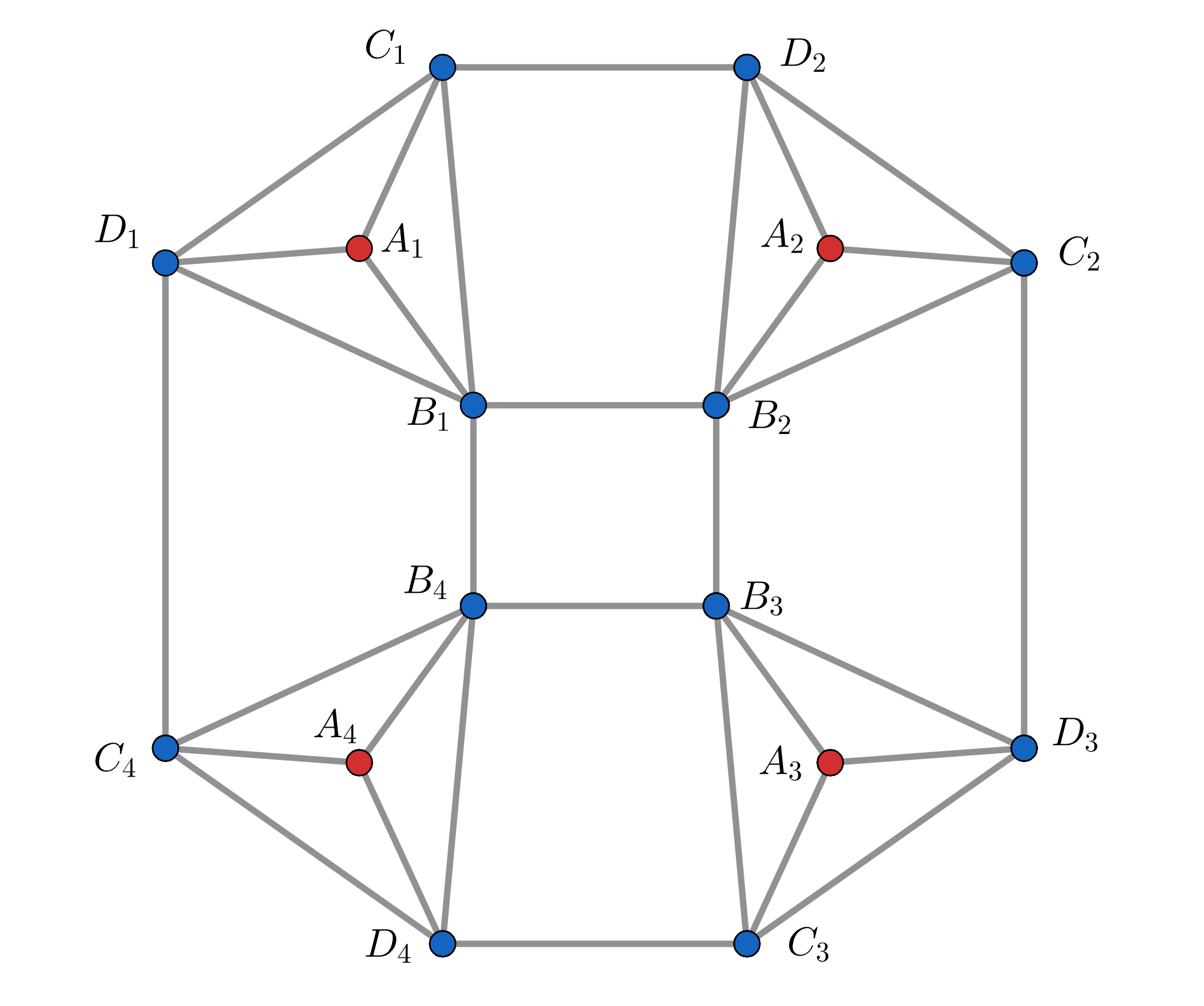}
    \caption{The graph $G_a$ for $a = 4$.}
    \label{fig:p2a}
    \end{subfigure}
\caption{Two examples from the family of graphs $G_a$ used in the proof of Theorem \ref{thm:p2a}. }
\label{fig:p2atwoconstr}
\end{figure}

We can now use this result to find the value of $p(k,a)$ for any even $k$. We show a lower bound for $p(k,a)$ using the same ideas as in the lower bound for Theorem \ref{thm:p2a}, and we then modify the family of graphs $G_a$ by applying the transformation $\mathcal{Q}$ displayed on Figure \ref{fig:transfoQ} - a transformation that has been used for instance by Klee in \cite{klee1976classification}.
\begin{proof}[Proof of Theorem \ref{thm:pka_even}]
We first show that $p(k,a) \geq \left(\frac{3}{2}k + 1\right)a$. Let $S = \{x_1, \dots, x_a\}$ be the $k$-independent set and for each $i \in \{1, \dots, a\}$ let $V_{i,n} := \left\{v \in V : d(x_i, v) = n\right\}$, where $n = 1, \dots, \frac{k}{2}$. Since $d(x_i, x_j) > k$, the sets above are all mutually disjoint. By $3$-connectivity each one contains at least $3$ vertices. We therefore get \[p \geq |S| + \left|\bigcup_{\substack{i = 1, \dots, a \\ j = 1, \dots, k/2}}V_{i,j}\right| \geq a + 3a\cdot\frac{k}{2}.\]

Now, we show that there exists a family of graphs with a $k$-independent set of size $a$ on $a + 3a\cdot\frac{k}{2}$ vertices. We start from the graph $G_a$ built in the proof of Theorem \ref{thm:p2a} and apply $\frac{k}{2} - 1$ times the transformation $\mathcal{Q}$ to each of its $a$ red vertices, which we are allowed to do since each one of these red vertices has degree exactly $3$. Each transformation adds $3$ vertices and increases the distance between red vertices by $2$. Thus starting from $G_a$ on $4a$ vertices where red vertices are at distance greater than $2$, we get a polyhedral graph on $4a + 3a(\frac{k}{2}-1) = a + 3a\frac{k}{2}$ vertices where red vertices are at distance greater than $2 + 2(\frac{k}{2}-1) = k$ from each other, showing that $p(k, a) \leq a + 3a\frac{k}{2}$.
\end{proof}
\begin{figure}[h]
    \centering
    \includegraphics[width=0.5\textwidth]{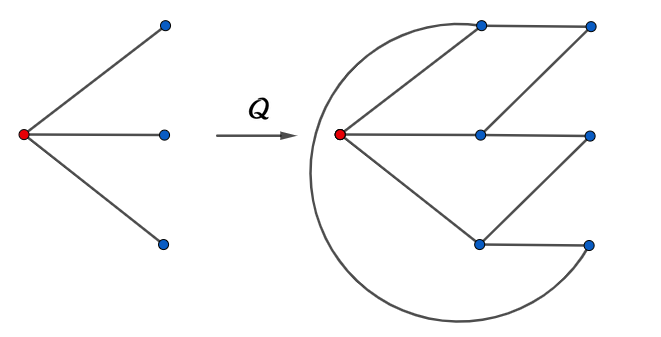}
    \caption{The transformation $\mathcal{Q}$}
    \label{fig:transfoQ}
\end{figure}

The only case remaining is when $k$ is odd. Once again we show a lower bound for $p(k, a)$ by using the $3$-connectivity of polyhedral graphs, but we use a different approach for the matching upper bound. This time we show by contradiction that if there were a smaller graph, we would be able to construct a polyhedron with an independent set of size $a$ on less vertices than what Theorem \ref{thm:pa} allows. 
\begin{proof}[Proof of Theorem \ref{thm:pka_odd}]
We start with an extremal graph $G$ for $p(1,a)$ whose red vertices all have degree $3$, which exists by Remark \ref{prop:deg3}. Using the same reasoning as in Theorem \ref{thm:pka_even} we may apply the transformation $\mathcal{Q}$ to each red vertex $\lfloor\frac{k}{2}\rfloor$ times to get a polyhedral graph with a $k$-independent set of size $a$ on $\left\lceil(3a/2)+2\right\rceil + 3a \frac{k-1}{2}$ vertices, and thus $p(k,a) \leq \left\lceil(3a/2)+2\right\rceil + 3a \frac{k-1}{2}$.

Assume for a contradiction that there exists a polyhedral graph $G$ on $p < \left\lceil(3a/2)+2\right\rceil + 3a \frac{k-1}{2}$ vertices with a $k$-independent set $S = \{x_1, \dots, x_a\}$ of size $a$. For each $i = 1, \dots, a$ and $n = 1, \dots, \frac{k-1}{2}$ define the set $V_{i, n} = \{v \in V : d(x_i, v) = n\}$ and let $Y$ be the set of remaining vertices. By $3$-connectivity and $k$-independence, each of the $V_{i, n}$ are disjoint with cardinality at least $3$. We therefore have that
\[
|Y| \leq p - (a + 3a\cdot\frac{k-1}{2}) < \left\lceil(a/2)+2\right\rceil.
\]
Now consider the graph $H$ obtained from $G$ as follows: $H$ only contains the vertices from $S$ and $Y$, and an edge is added between $x_i \in S, y_j \in Y$ iff $\exists u \in V_{i,(k-1)/2}$ such that $uy_j \in E(G)$. In other words, we collapse all the paths of length $\frac{k+1}{2}$ from $S$ to $Y$ into single edges. The resulting graph is still polyhedral and admits an independent set of size $a$, yet there are only $|S| + |Y| < \left\lceil(3a/2)+2\right\rceil$ vertices, contradicting Theorem \ref{thm:pa}. 
\end{proof}

\bibliography{biblio.bib}
\bibliographystyle{siam}  
\end{document}